\newtheorem{theorem}{Theorem}[section]
\newtheorem{lemma}[theorem]{Lemma}
\newtheorem{example}[theorem]{Example}
\newtheorem{remark}[theorem]{Remark}
\numberwithin{figure}{section}
\numberwithin{equation}{section}
\newcommand{\mbb}{\mathbb }
\newcommand{\mrm}{\mathrm}
\newcommand{\mcal}{\mathcal}
\renewcommand{\div}{\operatorname{div}}
\begin{document}

%

\begin{frontmatter}
  \title{  A robust $C^0$ interior penalty method for a gradient-elastic\\ Kirchhoff plate model}
  		\author[zjnu]{Mingqing Chen}
\address[zjnu]{School of Mathematical Sciences, Zhejiang Normal University, Zhejiang 321004, China}
  \ead{chenmq@zjnu.edu.cn}

    \author[sjtu]{Jianguo Huang\fnref{hjgfootnote}}
\address[sjtu]{School of Mathematical Sciences, and MOE-LSC, Shanghai Jiao Tong University, Shanghai 200240, China}
	\ead{jghuang@sjtu.edu.cn}
	\fntext[hjgfootnote]{Corresponding author. The work of this author was partially supported by NSFC (Grant No.\ 12071289) and the National Key Research and Development Project (2020YFA0709800).}
	
	\author[sufe]{Xuehai Huang\fnref{xuehaifootnote}}
    \address[sufe]{School of Mathematics, Shanghai University of Finance and Economics, Shanghai 200433, China} \ead{huang.xuehai@sufe.edu.cn}
	\fntext[xuehaifootnote]{The work of this author was partially supported by NSFC (Grant No.\ 12171300 and 12071289).}
	\begin{abstract}
	   This paper is devoted to proposing and analyzing a robust $C^0$ interior penalty method for a gradient-elastic Kirchhoff plate (GEKP) model over a convex polygon. The numerical method is obtained by combining the triangular Hermite element and a $C^0$ interior penalty method, which can avoid the use of higher order shape functions or macroelements. Next, a robust regularity estimate is established for the GEKP model based on our earlier result for a triharmonic equation on a convex polygon. Furthermore, some local lower bound estimates of the a posteriori error analysis are established. These together with an enriching operator and its error estimates lead to a C\'{e}a-like lemma. Thereby, the optimal error estimates are achieved, which are also robust with respect to the small size parameter. In addition, it is proved that this numerical method is convergent without any additional regularity assumption for the exact solution. Some numerical experiments are performed to verify the theoretical findings.
	\end{abstract}
	\begin{keyword}
		gradient-elastic Kirchhoff plate model; regularity; $C^0$ interior penalty method; robustness; error estimates;
	\end{keyword}	
\end{frontmatter}

 \section{Introduction}

To account for small-scale phenomena in micro-scale or even nano-scale structures, higher-order continuum theories were developed systematically by many researchers. The key feature is that the corresponding constitutive relations require to depend on additional size parameters to describe micro/nano scale properties of structures. We refer the reader to the references \cite{Cosserat1909,Toupin1962Elastic,MindlinTiersten1962Effects,Koiter1964couplestress,
	Mindlin1964Micro,Aifantis1984microstructural,AltanAifantis1992,RuAifantis1993simple} for details along this line. Most of the underlying mathematical models involve two or more size parameters. Aifantis and his collaborators proposed in \cite{AltanAifantis1992,RuAifantis1993simple} a one-parameter simplified strain gradient elasticity (SGE) theory for material deformation at the micro/nano scale, which is effective in simulating the deformation of elastic plastic materials. Its constitutive relation reads  (cf. \cite{NiiranenNiemi2017GEK})
 \begin{equation}\label{constitutive}
  \tilde{  \sigma}_{ij} = C_{ijkl}(  \varepsilon_{kl}- \iota^2  \varepsilon_{kl,mm} ),
\end{equation}
where, $\bm \varepsilon=(\varepsilon_{ij})$ is the strain tensor in elasticity, $\tilde{\bm \sigma}=(\tilde{ \sigma}_{ij})$ is the stress tensor of the SGE model, $C_{ijkl}$ is a fourth-order tensor and $\iota $ denotes the size parameter describing the length scale of the microstructure of the material under discussion. Here and below, we use Einstein's summation convention. Besides, for linear elastic problem,
\begin{equation}\label{Cilkj}
  C_{ijkl} = \lambda \delta_{ij}\delta_{kl} + \mu \delta_{ik}\delta_{jl} +\mu\delta_{il}\delta_{jk} ,
\end{equation}
where $\delta$ is the Kronecker delta function, and $\lambda$ and $\mu$ are the Lam\'e constants.

Given a thin plate occupying  a three dimensional region $\mathcal P=\Omega\times (-\frac t2 ,\frac t2)$, denote the displacement by $\bm u=(u_i) :\mathcal P\to \mathbb R^3$, where $\Omega$ is the midsurface of the plate and $t\ll{\rm diam}(\Omega)$ is the thickness of the plate. Under the Kirchhoff-Love hypothesis, the displacement field can be approximated as
 \begin{equation*}
    u_1= -x_3\frac{\partial w(x_1,x_2)}{\partial x_1},\quad     u_2 = -x_3\frac{\partial w(x_1,x_2)}{\partial x_2},\quad     u_3 =  w(x_1,x_2),
  \end{equation*}
 where $w: \Omega \to \mbb R$ is the transverse deflection at the midsurface $\Omega\times\{0\}$. Then, the classical strains in \eqref{constitutive} can be simplified to the plane strains $\varepsilon_{ij} = -x_3 \frac{\partial^2 w}{\partial x_i\partial x_j} $ with $i,j =1,2$. Next, we introduce a curvature tensor $\bm \kappa = \bm \kappa(\nabla w)=(\frac{\partial^2 w}{\partial x_i\partial x_j} ):\Omega\to \mathbb R^{2\times 2}$ as a restriction of the classic strain tensor field to the midsurface. Then, we can obtain the classic bending moment tensor $ \bm M = \mcal D((1-\nu)\bm\kappa + \nu \mrm{ tr }(\bm\kappa) \mathbf I )$ and the classical transverse force vector $\bm Q= \div\bm M$. Here, $\mathcal{D} = \frac{Et^3}{12(1-\nu^2) }$ is the bending rigidity related to Possion's ratio $ \nu \in (0,\,1/2)$ and Young's module $ E$. Then, the combination of the Kirchhoff plate bending equation, \eqref{constitutive} and \eqref{Cilkj}, one can deduce a sixth-order governing equation for the gradient-elastic Kirchhoff plate (GEKP) model
  \begin{equation}\label{IntroGEK}
  \div((1-\iota^2\Delta)\bm Q)=\mathcal{D}(\Delta^2 w - \iota^2 \Delta^3 w) = f,
  \end{equation}
 where $\iota\in (0,1)$ denotes the size parameter, and $f\in L^2(\Omega)$ is the transversal load. We refer to \cite{NiiranenNiemi2017GEK} for details of the above model.

 It is easy to observe that the mathematical model \eqref{IntroGEK} is a singularly perturbed problem with respect to the size parameter $\iota$. In fact, if  $\iota=0$, then the original problem will degenerate to a fourth-order elliptic problem. This implies that the solution of the reduced problem may not meet all of the boundary conditions of problem \eqref{model:GEK} and this would lead to the phenomenon of boundary layer as $\iota$ goes to zero. Thus, it is extremely necessary to develop a robust numerical method for approximating the GEKP model with respect to $\iota$.

As far as we know, there are some works on numerically solving \eqref{model:GEK}, such as the analytical methods \cite{NiiranenNiemi2017GEK,PapargyriGiannakopoulos2010,XuDengMeng2014,FuZhou2020,ZHOU2023}, the finite element methods (FEM) \cite{PegiosPapargyri2015,BabuPatel2019}, the extended Kantorovich method \cite{AshooriMahmoodi2013,WangHuangZhao2016}, the differential quadrature method \cite{MousaviNiiranen2015,IshaquddiGopalakrishnan2020,IshaquddinGopalakrishnan2021,LiJi2021}, the machine learning approach \cite{YanVescovini2023}, the isogeometric analysis method \cite{NiiranenKiendl2017GEK}, etc. However, most of these works mainly pay attention to numerical simulation using the underlying methods. In other words, there are few works on the construction of a robust numerical method for the GEKP model and developing the corresponding robust error analysis.

Hence, one of the main motivations of our study is to develop a robust numerical method for solving the GEKP model \eqref{model:GEK}. 
We can refer to \cite{NilssenTaiWinther2001,WangHuangTang2018singular,ChenHuangHuang2023,BrennerNeilan2011} and references therein for details about some robust nonconforming FEMs for approximating singularly perturbed fourth-order problems. However, these numerical methods only possess the sharp half-order convergence rate due to the effect of boundary layer. Inspired by the Nitsche's technique \cite{Nitsche1971} and the penalty technique \cite{Arnold1982,BrennerSung2005IPFourth}, the authors in \cite{Guzman2012Nitsche,HuangShiWang2021singular} devised some robust nonconforming finite element methods by imposing one of boundary conditions weakly for a singularly perturbed fourth-order problem. See also the work in \cite{WangHuangTang2018singular}.

Furthermore, there has developed a few of FEMs that approximate triharmonic equations, including the conforming FEM \cite{HuLinWu2023,ChenHuang2021Decomposition}, the decoupled mixed FEM \cite{Schedensack2016,AnHuangZhang2024,Gallistl2017}, the nonconforming FEM \cite{WuXu2019,HuZhang2019,HuZhang2017,WangXu2013,LiWu2024}, the $C^0$ interior penalty method \cite{Gudi2011sixthIPDG,ChenLiQiu2022}, the virtual element method \cite{DassiMoraReales2024,ChenHuangWei2022,AntoniettiManzini2020,ChenHuang2020,Huang2020Nonconforming}, and hybrid high-order method \cite{RenLiu2024}. It is well-known that the conforming finite element methods for approaching high-order partial differential equations are hard to be constructed and implemented due to the quite a number of degrees of freedom (DoFs) and the existence of supersmooth DoFs. Therefore we prefer to apply less smooth finite element. In particular, the authors in \cite{Gudi2011sixthIPDG,ChenLiQiu2022} proposed some $C^0$ interior penalty methods for a third-order Laplace equation using the cubic Lagrange finite element space. This element only has 10 DoFs. Making use of the a posteriori analysis techniques, some C\'{e}a-like lemmas have been established with minimal regularity assumptions on the exact solution. This kind of methods has an advantage of using a small number of degrees of freedom. Thus, we are tempted to develop an interior penalty method to approximate such singularly perturbed high-order problems.



Another purpose of our study is to build up the regularity theory for the GEKP problem \eqref{model:GEK} and further to deduce robust error analysis of the underlying numerical method. In \cite{NilssenTaiWinther2001}, the authors derived a uniform regularity estimate for a singularly perturbed fourth-order problem with the help of the regularity result for a biharmonic equation. Recall that the regularity theory of a triharmonic equation on a convex polygon has been established in \cite{ChenHuangHuang2023}. This together with the regularity theory of the reduced problem \eqref{model:KPB} in \cite{Grisvard1985} can lead to the following robust regularity estimate
\[
  |w-w_0|_2 + \iota\|w\|_3 + \iota^2\|w\|_4\lesssim \iota^{1/2}\|f\|_0.
  \]

The main results of our work in this paper can be summarized as follows. Borrowing the ideas in  \cite{Nitsche1971,Arnold1982,BrennerSung2005IPFourth,Gudi2011sixthIPDG}, we design a $C^0$ interior penalty method to discretize problem \eqref{model:GEK}. The element we used here is the triangular Hermite element having 10 DoFs, which is easy to implement numerically. Using the similar approach as in \cite{Gudi2011sixthIPDG,Gudi2010medius,Gudi2010DG,HuangShiWang2021singular}, %
we first introduce a quasi interpolation operator and an enriching operator connecting the Hermite element space to a $C^2$ element space. Next, we derive some local lower bound estimates of a posteriori error analysis for the resulting numerical method, which will be exploited to achieve a C\'{e}a-like lemma. Thereby, the optimal and robust error estimates with respect to the size parameter can be obtained. Furthermore, it can be shown that the numerical method we proposed is convergent for any $w\in H^3_0(\Omega)$.

The rest of this paper is arranged as follows. In Sect. \ref{SectNotation}, we introduce some notation and existing results for further analysis.
In Sect. \ref{SectGEK}, we present the variational formulation of problem \eqref{model:GEK} and develop the underlying regularity estimate relying on Theorem 3.5 in \cite{ChenHuangHuang2023}. In Sect. \ref{SectFEM}, we design a $C^0$ interior penalty method of problem \eqref{model:GEK} and prove its well-posedness. In Sect. \ref{Sect:InterpolationError}, we define two interpolation operators and derive their error estimates. In Sect. \ref{Sect:ErrorAnalysis}, a C\'{e}a-like lemma can be derived with the aid of some interpolation error estimates and the a posteriori analysis techniques. Then, we are able to analyse the convergence properties of the method. In the last section, the numerical experiments are performed to verify the theoretical findings.

\section{Preliminaries}\label{SectNotation}

   Throughout this paper, let $\Omega$ be a bounded convex domain, which is partitioned into a family of shape regular triangles $\mathcal{T}_h=\{K\}$. That is to say that there exists a constant $\gamma_0\geq 1$ such that
 \[
  \frac{h_K}{\rho_K}\leq \gamma_0 \quad \forall \;K\in \mcal T_h,
 \]
 where $h_K={\rm diam}(K)$ and $h=\max_{K\in \mathcal{T}_h}h_K$. Use $\mathcal V(\mathcal T_h)$ and $\mathcal V^i(\mathcal T_h)$ (resp. $\mathcal{E}(\mathcal T_h)$ and $\mathcal{E}^i(\mathcal T_h)$) to stand for the sets of all and interior vertices (resp. edges) of $\mathcal T_h$, respectively.
   We then introduce the patch of vertices and edges for further use.
   For any vertex $\delta\in \mathcal V(\mathcal T_h)$ and edge $e\in \mathcal{E}(\mathcal T_h)$, write $\omega_{\delta}$ and $\omega_{e}$ to be respectively the unions of all triangles in $\mathcal{T}_{\delta}$ and $\mathcal{T}_{e}$, where $\mathcal{T}_{\delta}$ and $\mathcal{T}_{e}$ is the set of all triangles in $\mathcal{T}_h$ sharing common vertex $\delta$ and edge $e$, respectively.
   In addition, for a finite set $A$, denote by $\# A$ its cardinality. For each $K \in \mcal T_h$, denote by $\bm n_K$ the unit outward normal to $\partial K$ and denote the unit tangential direction of $\partial K$ by $\bm t_K$ obtained by rotating $\bm n_K$ $90^\circ$ counterclockwise. Without causing any confusion, abbreviate $\bm n_K$ and $\bm t_K$ as $\bm n$ and $\bm t$ for simplicity. For all $e \in \mathcal{E}^i(\mathcal T_h)$ sharing by two triangles $K^+$ and $K^-$, we preset the unit normal vector of $e$ by $\bm n_e\coloneqq \bm n_+$, where $\bm n_+$ is the unit outward normal to $e$ of $K^+$ and denote the unit tangential direction of $e$ by $\bm t_e$ obtained by rotating $\bm n_e$ $90^\circ$ counterclockwise. Define the jump and average on $e$ as follows:
 \begin{align*}
  [\![\partial_{\bm{a}} v]\!] |_e \coloneqq   \partial_{\bm a_e}v|_{K^+} -   \partial_{\bm a_e}v|_{K^-},
     \quad  &
      \{\!\{\partial_{\bm{a}}  v\}\!\}  |_e \coloneqq \frac{  \partial_{\bm a_e}v|_{K^+} + \partial_{\bm a_e }v|_{K^-}} 2,
       \\
        [\![\partial_{\bm{ab}} v]\!] |_e \coloneqq   \partial_{\bm a_e\bm b_e}v|_{K^+} -   \partial_{\bm a_e\bm b_e}v|_{K^-},
     \quad  &
    \{\!\{\partial_{\bm{ab}}  v\}\!\}  |_e \coloneqq \frac{  \partial_{\bm a_e\bm b_e}v|_{K^+} + \partial_{\bm a_e\bm b_e}v|_{K^-}} 2,
     \\
     [\![\partial_{\bm{abc}} v]\!] |_e \coloneqq   \partial_{\bm a_e\bm b_e\bm c_e}v|_{K^+} -   \partial_{\bm a_e\bm b_e\bm c_e}v|_{K^-},
     \quad & \{\!\{\partial_{\bm{abc}}  v\}\!\}  |_e \coloneqq \frac{  \partial_{\bm a_e\bm b_e\bm c_e}v|_{K^+} + \partial_{\bm n_e\bm n_e\bm t_e}v|_{K^-}} 2.
   \end{align*}
   where $\bm a,\bm b,\bm c= \bm n,\bm t$.
   For any $e \in \mcal E^{\partial }(\mcal T_h) $, the associated jump and average are given by
    \begin{align*}
     [\![\partial_{\bm{a}} v]\!] |_e \coloneqq   \partial_{\bm a_e}v ,
     \quad  &
       \{\!\{\partial_{\bm{a}}  v\}\!\}  |_e \coloneqq    \partial_{\bm a_e}v  ,
       \\
        [\![\partial_{\bm{ab}} v]\!] |_e \coloneqq   \partial_{\bm a_e\bm b_e}v ,
     \quad  &
    \{\!\{\partial_{\bm{ab}}  v\}\!\}  |_e \coloneqq  \partial_{\bm a_e\bm b_e}v  ,
     \\
     [\![\partial_{\bm{abc}} v]\!] |_e \coloneqq   \partial_{\bm a_e\bm b_e\bm c_e}v ,
     \quad & \{\!\{\partial_{\bm{abc}}  v\}\!\}  |_e \coloneqq \partial_{\bm a_e\bm b_e\bm c_e}v .
   \end{align*}
   Clearly,
   \[
[\![ uv ]\!] =  \{\!\{u\}\!\}  [\![ v ]\!] + [\![ u]\!] \{\!\{v\}\!\}  .
\]


   Given a bounded domain $D$ and any integers $m,k\geq0$, denote by $H^m(D)$ the standard Sobolev spaces on $D$ with norm $\|\cdot\|_{m,D}$ and semi-norm $|\cdot|_{m,D}$, and $H_0^m(D)$ the closure of $C_0^{\infty}(D)$ with respect to $\|\cdot\|_{m,D}$. The notation $(\cdot,\,\cdot)_{D}$ symbolizes the standard inner product on $D$. For $D=\Omega$, we abbreviate $\|\cdot\|_{m,\Omega}$, $|\cdot|_{m,\Omega}$ and $(\cdot,\,\cdot)_{D}$ as $\|\cdot\|_{m}$, $|\cdot|_{m}$ and $(\cdot,\,\cdot)$, respectively.
   Let $\mathbb{P}_{k}(D)$ be the set of all polynomials on $D$ with the total degree up to $k$. Denote by $\Pi_D^k$ the standard $L^2$ projection operator from $L^2(D)$ to $\mathbb P_k(D)$.

   For simplicity of expression, we use $\lesssim$ to stand for $\leq C$, where $C$ may be a generic positive constant independent of the mesh size $h$ and the material parameter $\iota$. And $a\eqsim b$ indicates $a\lesssim b\lesssim a$.
    Here $\nabla_h$ (resp. $\Delta_h$) is the elementwise version of $\nabla$ (resp. $\Delta$) with respect to $\mathcal T_h$.

   For forthcoming theoretical analysis, recall some basic results. As given in \cite[Theorem 1.5.1.10]{Grisvard1985}, the following multiplicative trace inequality holds
        \begin{align*}
		\|v\|_{0,\partial{D}}^2 & \lesssim \|v\|_{0,D} \|v\|_{1,D} \quad \forall\;v\in H^1(D),
		\end{align*}
where $D$ is a bounded domain with Lipschitz boundary. And by the scaling argument, there exists a positive constant $C(\varepsilon)$ for any $\varepsilon> 0$,  such that
		\begin{align}\label{eq:trace1}
		\|v\|_{0,\partial{K}} & \lesssim \varepsilon h_K^{1/2} |v|_{1,K}
		+ C(\varepsilon) h_K^{-1/2} \|v\|_{0,K} \quad \forall\; v\in H^1(K),\,K\in \mcal T_h.
		\end{align}
Moreover, by the inverse inequality of polynomials, one has readily
\begin{align}
\|\nabla^m p \|_{0,\partial K}  \lesssim &  h_K^{-1/2}\|\nabla^m p \|_{0,K} \quad \forall\;p\in \mbb P_k(K),\,K\in \mcal T_h.
   \label{eq:GEKStranginverseP0}
\end{align}

\section{Regularity results}\label{SectGEK}

     Let us consider the following boundary value problem with the Dirichlet boundary conditions:
      \begin{equation}\label{model:GEK}
        \begin{cases}
       \mathcal{D}(  \Delta^2w - \iota^2 \Delta^3 w) = f & \textrm{in}~\Omega ,\\
          w = \partial_{\bm{n}} w = \partial_{\bm{nn}} w = 0 &\textrm{on}~\partial\Omega.
        \end{cases}
      \end{equation}
      Without loss of generality, set ${\mathcal D}=1$ throughout this paper. The weak formulation of problem \eqref{model:GEK} is to find $w\in V\coloneqq H^3_0(\Omega)$ such that
     \begin{equation}\label{weakForm:GEK}
       \iota^2a(w,v) + b(w,v) = (f,v)\quad \forall\;v\in V,
     \end{equation}
     where
      \[
  a(w,v) =   (\nabla^3 w, \nabla^3 v) \quad\text{and}\quad  b(w ,v) =  (\nabla^2   w , \nabla^2 v).
  \]
Set $\iota=0$, problem \eqref{model:GEK} will reduce to a fourth-order elliptic problem:
  \begin{equation}\label{model:KPB}
        \begin{cases}
           \Delta^2 w_0  = f  & \textrm{in}~\Omega ,\\
          w_0 = \partial_{\bm{n}} w_0 = 0&\textrm{on}~\partial\Omega .
        \end{cases}
      \end{equation}
     Its weak formulation is to find $w_0\in  H^2_0(\Omega)$ such that
     \begin{equation}\label{weakForm:KPB}
        b(w_0 ,v) = (f,v)\quad \forall\;v\in H^2_0(\Omega).
     \end{equation}
   It's worth noting that for any convex polygon $\Omega$, the operator $\Delta^2$ is an isomorphism from $H^3(\Omega)\cap H^2_0(\Omega)$ to $H^{-1}(\Omega)$ (cf. \cite[Corollary~7.3.2.5]{Grisvard1985}). Then, $w_0\in H^3(\Omega)$ and there exists a positive constant $C$ such that
   \begin{equation}\label{eq:regularityKPB}
     \|w_0\|_3\leq C\|f\|_0.
   \end{equation}

For the regularity estimate of \eqref{model:GEK} , we recall a useful result of the triharmonic equation.
\begin{theorem}(\cite[Theorem 3.5]{ChenHuangHuang2023})\label{th:TriharmonicRegularity}
	Let $\Omega$ be a convex polygon. For any $w\in H^3_0(\Omega)$ satisfying
	\begin{equation*} 
	-\Delta^3 w =f \in H^{-2}(\Omega),
	\end{equation*}
  it holds $w\in H^4(\Omega)$ and
	\begin{equation*}
	\| w\|_{4} \lesssim \|f\|_{-2}.
	\end{equation*}
\end{theorem}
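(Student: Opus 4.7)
The plan is to decouple the sixth-order equation into a biharmonic problem plus a Laplacian shift and to chain together the elliptic regularity results that are available on convex polygons. First I would set $u := -\Delta w$. Because $w \in H^3_0(\Omega)$ and each edge of the polygon is straight, the vanishing of $w$ along an edge forces $\partial_{tt} w = 0$ there, so combined with $\partial_{nn} w = 0$ on $\partial\Omega$ one obtains $u = -(\partial_{nn} w + \partial_{tt} w) = 0$ on $\partial\Omega$; hence $u \in H^1_0(\Omega)$. The given identity $-\Delta^3 w = f$ then rewrites as the biharmonic equation $\Delta^2 u = f$ in $H^{-2}(\Omega)$.

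Next I would establish $u \in H^2(\Omega)$ with $\|u\|_2 \lesssim \|f\|_{-2}$ by appealing to the regularity theory of the biharmonic operator on convex polygons in the spirit of Grisvard. Since $u$ a priori carries only the Dirichlet boundary condition $u|_{\partial\Omega} = 0$, a direct invocation of the standard clamped-plate regularity is unavailable; the argument must exploit that $u$ comes from $w \in H^3_0(\Omega)$, so the hidden constraint $\partial_n w = 0$ restricts $u$ to a subspace of $H^1_0(\Omega)$ on which the simply-supported biharmonic is well-behaved. Concretely, I would test the weak form $(\nabla^3 w,\nabla^3 v) = \langle f,v\rangle_{H^{-2},H^2_0(\Omega)}$ against $v$ constructed as the solution of an auxiliary biharmonic problem on $\Omega$, and close the estimate by duality using the isomorphism $-\Delta : H^2(\Omega)\cap H^1_0(\Omega) \to L^2(\Omega)$, which is valid on any convex polygon.

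Once this estimate is in hand, the final step is an elliptic shift for the Dirichlet Laplacian applied to $-\Delta w = u \in H^2(\Omega)$ with $w\in H^3_0(\Omega)$: the additional boundary data $\partial_n w = \partial_{nn} w = 0$ suppress the dominant corner singularities that would otherwise obstruct $H^4$ regularity, and one obtains $w \in H^4(\Omega)$ with $\|w\|_4 \lesssim \|u\|_2 \lesssim \|f\|_{-2}$. The main obstacle I foresee is the middle step: extracting $H^2$ regularity for $u$ from merely $u \in H^1_0(\Omega)$ is delicate because the simply-supported biharmonic problem on a polygon is known to suffer from a Sapondzhyan-type reduction of regularity at corners, so the argument cannot study the equation for $u$ in isolation but must carry along the full triharmonic structure in order to recover the missing Neumann-type information and keep the constants under control on every convex polygon.
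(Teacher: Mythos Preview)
The paper does not supply a proof of this statement; it is quoted as Theorem~3.5 of \cite{ChenHuangHuang2023} and used as a black box in the proof of Theorem~\ref{th:GEKRegularity}. There is therefore no in-paper proof to compare your proposal against.

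On the merits of your outline: the decomposition $u=-\Delta w$ and the observation $u\in H^1_0(\Omega)$ are correct, but both shift steps carry the gaps you yourself flag, and these are not minor. For the biharmonic step, $u$ satisfies $\Delta^2 u=f$ with only the single condition $u|_{\partial\Omega}=0$; this simply-supported problem on a polygon is exactly the setting of the Sapondzhyan paradox, and one cannot extract $\|u\|_2\lesssim\|f\|_{-2}$ from it without re-injecting information coming from the full triharmonic structure---so the phrase ``carry along the full triharmonic structure'' is not a side remark but the entire content of the step, and your proposal does not say how to do it. For the Laplacian step, $-\Delta w=u\in H^2(\Omega)$ with $w\in H^1_0(\Omega)$ yields only $w\in H^2(\Omega)$ on a general convex polygon; lifting to $H^4$ requires that the stress-intensity coefficients of every corner singularity $r^{k\pi/\omega_j}\sin(k\pi\theta/\omega_j)$ with exponent below $3$ vanish, and the edge conditions $\partial_{\bm n} w=\partial_{\bm{nn}} w=0$ do not automatically enforce this, since the singularities are attached to vertices while those conditions are imposed along open edges. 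The proposal is thus a fair map of where the difficulties lie, but it does not yet contain a proof.
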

Following the argument in \cite[Lemma 5.1]{NilssenTaiWinther2001}, we have the following regularity result.
\begin{theorem}\label{th:GEKRegularity}
Assume $\Omega$ be a convex polygon. Let $w\in H^3_0(\Omega)$ and $w_0\in H^2_0(\Omega)$ be the solutions of problems \eqref{weakForm:GEK} and \eqref{weakForm:KPB}, respectively. Then it holds $w\in H^4(\Omega)$ and
  \begin{equation}\label{eq:GEKRegularity}
    |w-w_0|_2 + \iota\|w\|_3 + \iota^2\|w\|_4\lesssim \iota^{1/2}\|f\|_0.
  \end{equation}
  Here, the hidden constant is independent of the size parameter $\iota$.
\end{theorem}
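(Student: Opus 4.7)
The plan is to adapt the singular-perturbation technique of Nilssen--Tai--Winther \cite[Lemma 5.1]{NilssenTaiWinther2001} to this sixth/fourth-order setting, and then upgrade the output via the triharmonic regularity of Theorem~\ref{th:TriharmonicRegularity} to reach $H^4$. First I would test \eqref{weakForm:GEK} with $v=w$ to obtain the crude stability bound $|w|_2+\iota|w|_3\lesssim\|f\|_0$, and subtract \eqref{weakForm:KPB} to arrive at the error identity
\[
\iota^2 a(w,v)+b(e,v)=0\quad\forall\,v\in V,\qquad e\coloneqq w-w_0.
\]
The main obstacle is that $e\in H^2_0(\Omega)\cap H^3(\Omega)$ is \emph{not} in $V$, because $\partial_{\bm{nn}}e|_{\partial\Omega}=-\partial_{\bm{nn}}w_0|_{\partial\Omega}$ need not vanish---this is precisely the boundary-layer phenomenon responsible for the $\iota^{1/2}$ factor, and manufacturing an admissible substitute for $e$ is the delicate step.

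To handle it, I would introduce a boundary-layer corrector of the form
\[
z(x)=\frac{1}{2}\,d(x)^2\,\chi\!\bigl(d(x)/\iota\bigr)\,G(x),
\]
where $d$ is the distance to $\partial\Omega$, $\chi$ is a smooth cutoff with $\chi(0)=1$ and support in $[0,1)$, and $G$ is a smooth extension into $\Omega$ of the trace $\partial_{\bm{nn}}w_0|_{\partial\Omega}$, built edge-by-edge with a partition of unity near the corners. A direct scaling argument in a tubular neighborhood shows $z\in V$, $\partial_{\bm{nn}}z=\partial_{\bm{nn}}w_0$ on $\partial\Omega$, and
\[
|z|_2+\iota|z|_3\lesssim\iota^{1/2}\|\partial_{\bm{nn}}w_0\|_{0,\partial\Omega}\lesssim\iota^{1/2}\|w_0\|_3\lesssim\iota^{1/2}\|f\|_0,
\]
where the last two inequalities combine the trace estimate for $H^3$-functions with the biharmonic regularity \eqref{eq:regularityKPB}. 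Then $\tilde e\coloneqq e+z\in V$, and inserting $v=\tilde e$ into the error identity rearranges to
\[
\iota^2|w|_3^2+|e|_2^2=\iota^2(\nabla^3 w,\nabla^3 w_0)-\iota^2(\nabla^3 w,\nabla^3 z)-(\nabla^2 e,\nabla^2 z).
\]
Applying Cauchy--Schwarz and Young's inequality to each right-hand term, using $|w_0|_3\lesssim\|f\|_0$ and the bounds on $z$, allows the $\iota^2|w|_3^2$ and $|e|_2^2$ contributions to be absorbed on the left, producing $|w-w_0|_2+\iota\|w\|_3\lesssim\iota^{1/2}\|f\|_0$.

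Finally, to recover the $H^4$ bound I would rewrite the strong form as $-\Delta^3 w=(f-\Delta^2 w)/\iota^2$ and estimate the right-hand side in $H^{-2}(\Omega)$. For any $\phi\in H^2_0(\Omega)$, the weak form \eqref{weakForm:KPB} yields $(f,\phi)=b(w_0,\phi)$, hence $\langle f-\Delta^2 w,\phi\rangle=-b(e,\phi)$ and $\|f-\Delta^2 w\|_{-2}\lesssim|e|_2\lesssim\iota^{1/2}\|f\|_0$. Applying Theorem~\ref{th:TriharmonicRegularity} to $w\in H^3_0(\Omega)$ with right-hand side $(f-\Delta^2 w)/\iota^2$ then gives $\iota^2\|w\|_4\lesssim\|f-\Delta^2 w\|_{-2}\lesssim\iota^{1/2}\|f\|_0$, and combined with the previous step this establishes \eqref{eq:GEKRegularity}.
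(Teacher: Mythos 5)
Your overall architecture is sound and parallel to the paper's: derive the intermediate bound $|w-w_0|_2+\iota|w|_3\lesssim\iota^{1/2}\|f\|_0$, and convert it into $\iota^2\|w\|_4\lesssim\iota^{1/2}\|f\|_0$ via Theorem~\ref{th:TriharmonicRegularity} by writing $-\Delta^3 w=\iota^{-2}(f-\Delta^2 w)$ and noting $\|f-\Delta^2 w\|_{-2}\lesssim|w-w_0|_2$. That last step is essentially identical to what the paper does (the paper writes it as $-\Delta^3 w=\iota^{-2}\Delta^2(w_0-w)$, which is the same thing). The genuine divergence is in how the intermediate bound is obtained, and that is where the gap lies.

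The paper does \emph{not} manufacture an admissible test function. Instead it tests the strong PDE directly against $v\in H^2_0(\Omega)\cap H^3(\Omega)$ and records the resulting extra boundary term $-\iota^2(\partial_{\bm{nnn}}w,\partial_{\bm{nn}}v)_{\partial\Omega}$ (see \eqref{eq:GEKRegularity2}), then takes $v=w-w_0$. The boundary term is controlled by the multiplicative trace inequality together with the already-established bound $\|w\|_4\lesssim\iota^{-2}|w-w_0|_2$ from Theorem~\ref{th:TriharmonicRegularity}, which is exactly what produces the factor $\iota^{1/2}$ without ever constructing a corrector. Note that this means the paper uses the $H^4$ estimate \emph{twice}: first in the crude form $\|w\|_4\lesssim\iota^{-2}|w-w_0|_2$ to absorb the boundary term, and afterwards, once $|w-w_0|_2$ is bounded, to obtain $\iota^2\|w\|_4\lesssim\iota^{1/2}\|f\|_0$.

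Your boundary-layer corrector, by contrast, has a regularity problem that your outline does not address. You need $z\in H^3(\Omega)$ so that $\tilde e=(w-w_0)+z$ lands in $V=H^3_0(\Omega)$, but with $z=\tfrac12 d^2\chi(d/\iota)G$ the semi-norm $|z|_3$ contains the term $\tfrac12 d^2\chi\,\nabla^3 G$ (tangential third derivatives of $G$ near $\partial\Omega$), so $G$ must have three square-integrable derivatives there. The available data, however, is only $\partial_{\bm{nn}}w_0|_{\partial\Omega}$, which for $w_0\in H^3(\Omega)$ lies in $H^{1/2}(\partial\Omega)$ only; its extensions into $\Omega$ are generically no better than $H^1(\Omega)$, and in particular there is no ``smooth extension'' $G$ matching that trace. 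Consequently the claimed estimate
$|z|_2+\iota|z|_3\lesssim\iota^{1/2}\|\partial_{\bm{nn}}w_0\|_{0,\partial\Omega}$
cannot hold: the tangential contributions to $|z|_3$ are not controlled by the $L^2(\partial\Omega)$ norm of the trace, but require $\|\partial_{\bm{nn}}w_0\|_{H^s(\partial\Omega)}$ with $s$ well beyond what is available. (A minor slip in the same passage: you write $z\in V$; you should have $z\in H^3(\Omega)$, since by construction $\partial_{\bm{nn}}z\ne0$ on $\partial\Omega$, so $z\notin H^3_0(\Omega)$; only $\tilde e$ is meant to be in $V$.) Salvaging the corrector would require, e.g., mollifying the boundary trace at scale $\iota$ and tracking the resulting extra error, which is a non-trivial additional argument your outline does not supply. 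The paper's route via the multiplicative trace inequality and the $\|w\|_4\lesssim\iota^{-2}|w-w_0|_2$ bound sidesteps this entirely and is worth internalizing. Your crude first step (testing with $v=w$ to get $|w|_2+\iota|w|_3\lesssim\|f\|_0$) is also never used downstream.
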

\begin{proof}
  The first equation of problem \eqref{model:GEK} can be reformulated as
  \[
    - \Delta^3 w = \iota^{-2}\Delta^2(w_0  -  w)\in H^{-2}(\Omega),
  \]
  which along with Theorem \ref{th:TriharmonicRegularity} yields $w\in H^4(\Omega)$ and
  \begin{equation}\label{eq:GEKRegularity1}
    \|w\|_4\lesssim \|\iota^{-2}\Delta^2(w_0  -  w)\|_{-2}\lesssim  \iota^{-2} | w_0  -  w  |_2.
  \end{equation}
 Thus, it suffices to prove
   \begin{equation*}
    |w-w_0|_2 + \iota\|w\|_3 \lesssim \iota^{1/2}\|f\|_0.
  \end{equation*}
  For any $v\in H^2_0(\Omega)\cap H^3(\Omega)$, it holds
  \[
   v|_{\partial\Omega} = \partial_{\bm{n}}v|_{\partial\Omega} = \partial_{\bm{nt}} v |_{\partial\Omega}= 0.
  \]
Then, by \eqref{weakForm:GEK} and \eqref{weakForm:KPB}, we have
  \begin{align}\label{eq:GEKRegularity2}
    \iota^2(\nabla^3w,\nabla^3 v) +(\nabla^2w,\nabla^2 v)-\iota^2 (\partial_{\bm{nnn}} w, \partial_{\bm{nn}} v)_{\partial \Omega} = (\nabla^2w_0,\nabla^2 v) .
  \end{align}
    Choosing $v$ as $w-w_0$ in \eqref{eq:GEKRegularity2}, we have by integration by parts that
    \begin{align}\label{eq:GEKRegularity3}
      \iota^2|w|_3^2  + |w-w_0|_2^2
       &    =   \iota^2 (\nabla^3w,\nabla^3 w_0)  - \iota^2(\partial_{\bm{nnn}} w, \partial_{\bm{nn}}  w_0 )_{\partial \Omega}.
    \end{align}
    By the multiplicative trace inequality, \eqref{eq:regularityKPB} and \eqref{eq:GEKRegularity1},
    \begin{align*}
     -\iota^2 (\partial_{\bm{nnn}} w, \partial_{\bm{nn}} w_0 )_{\partial \Omega}
     & \lesssim \iota^2\|w\|_3^{1/2}\|w\|_4^{1/2}\|w_0\|_2^{1/2}\|w_0\|_3^{1/2}
               \notag\\
     & \lesssim \iota\|w\|_3^{1/2}|w-w_0|_2^{1/2} \|f\|_0.
    \end{align*}
      From \eqref{eq:regularityKPB} we get
      \begin{align*}
         \iota^2 (\nabla^3w,\nabla^3 w_0)  \leq \iota^2 |w|_3|w_0|_3  \lesssim \iota^2 |w|_3 \|f\|_0  .
      \end{align*}
    Combining \eqref{eq:GEKRegularity3} and the last two inequalities gives
   \begin{align*}
          \iota^2|w|_3^2  + |w-w_0|_2^2 \lesssim& (\iota^{1/2}\|w\|_3^{1/2}|w-w_0|_2^{1/2} + \iota^{3/2} |w|_3)\iota^{1/2} \|f\|_0
          \notag\\
          \lesssim & (\iota \|w\|_3 + |w-w_0|_2) \iota^{1/2}\|f\|_0,
  \end{align*}
  which along with the absorbing technique yields the desired result.
\end{proof}

 \section{$H^3$ nonconforming finite element method}\label{SectFEM}

For a triangle $K\in \mathcal T_h$, let $\mathcal V(K)$ and $\mathcal E(K)$ be the set of all vertices and edges of $K$, respectively. 

  \subsection{Finite element space}

 The Hermite element in a triangle takes $ V(K):= \mathbb P_{3}(K )  $ as the local shape function space. The corresponding DoFs are
       \begin{equation}\label{GEKdof:H3ncfemdof}
          \left\{
        \begin{aligned}
         & v(\zeta)\quad  \;\;\;\forall \; \zeta\in \mcal V(K),\\
        & \nabla  v(\zeta)\quad  \forall \; \zeta\in \mcal V(K), \\
         & v(\zeta_K).
        \end{aligned}
        \right.
      \end{equation}
 where $\zeta_K$ is the barycenter of $K$.
     Let
\begin{align*}
  V_h \coloneqq \{ &v\in H^1(\Omega):\, v|_K \in V(K) \text{ for each $K\in \mcal T_h$, $v$ and $\nabla v$ are continuous at all }\\
   &\quad\quad \quad  \quad\text{vertices of triangulation $\mathcal T_h$ and all DoFs \eqref{GEKdof:H3ncfemdof} vanish on boundary}\}.
\end{align*}

     \subsection{Discrete formulation}
Assume $w$ is sufficiently smooth. For any $v\in V_h\subset H^1_0(\Omega)$, it follows from integration by parts that
      \begin{align}\label{eq:discreteFormIntegration1}
      -  (\Delta^3 w, v) & =  (\nabla  \Delta^2 w, \nabla v)
      \notag\\
      &= \sum_{K\in \mcal T_h} \Big(  - (\nabla^2 \Delta w, \nabla^2 v)_K  + (\partial_{\bm{n}} \nabla \Delta w,\nabla v)_{\partial K} \Big)
      \notag\\
      &= \sum_{K\in \mcal T_h} \Big(   (\nabla^3  w, \nabla^3 v) _K -  (\partial_{\bm{n}} \nabla^2 w,\nabla^2 v)_{\partial K}  + (\partial_{\bm{n}} \nabla  \Delta w,\nabla v)_{\partial K}\Big)
      \end{align}
  and
       \begin{align}\label{eq:discreteFormIntegration7}
        (\Delta^2 w, v)
      &= \sum_{K\in \mcal T_h} \Big(   (\nabla^2  w, \nabla^2 v)_K  -  (\partial_{\bm{n}} \nabla w,\nabla  v)_{\partial K}   \Big).
      \end{align}
     Note that
      \begin{align} \label{eq:discreteFormIntegration9}
        \partial_i v =  n_i\partial_{\bm{n}} v   + t_i \partial_{\bm t} v\quad \forall\; v\in H^1(K),\;i=1,2,
      \end{align}
      with any $\bm n=(n_1,n_2)^{\intercal} \in \mbb R^2$ and  $\bm t=(t_1,t_2)^{\intercal} \in \mbb R^2$.
      Then we get easily that
      \begin{align}
        \nabla^2 v &= \bm n \partial_{\bm{nt}}v \bm t^{\intercal} + \bm n \partial_{\bm{nn}}v \bm n^{\intercal} +\bm t \partial_{\bm{tn}} v\bm n^{\intercal}  +\bm t \partial_{\bm{tt}} v\bm t^{\intercal}, \label{eq:discreteFormIntegration2}
        \\
      \partial_{\bm{n}} \nabla \Delta w&= \partial_{\bm{nn}}\Delta w \bm n + \partial_{\bm{nt}}\Delta w \bm t,\label{eq:discreteFormIntegration3}
      \\
   \partial_{\bm{n}}\nabla^2 w  &= \bm n \partial_{\bm{nnn}}w \bm n^{\intercal} + \bm n \partial_{\bm{nnt}}w \bm t^{\intercal} +  \bm t \partial_{\bm{nnt}}w \bm n^{\intercal} + \bm t \partial_{\bm{ntt}}w \bm t^{\intercal} ,\label{eq:discreteFormIntegration4}\\
   \partial_{\bm{n}} \nabla w & =  \partial_{\bm{nn}}w \bm n^{\intercal} + \partial_{\bm{nt}}w \bm t^{\intercal} .\label{eq:discreteFormIntegration8}
      \end{align}
      Combining \eqref{eq:discreteFormIntegration9}-\eqref{eq:discreteFormIntegration4} and \eqref{eq:discreteFormIntegration1}, we find
      \begin{align}\label{eq:discreteFormIntegration5}
      -   (\Delta^3 w, v)
      = &  \sum_{K\in \mcal T_h}\Big(  (\nabla^3  w, \nabla^3 v)_K  + ( \partial_{\bm{nn}}\Delta w  ,\partial_{\bm{n}} v)_{\partial K}+ ( \partial_{\bm{nt}}\Delta w  ,\partial_{\bm t} v)_{\partial K}\Big)
      \notag\\
      & - \sum_{K\in \mcal T_h}\Big( (\partial_{\bm{nnn}} w, \partial_{\bm{nn}} v)_{\partial K}
          +2 (\partial_{\bm{nnt}} w, \partial_{\bm{nt}} v)_{\partial K}
         + (\partial_{\bm{ntt}} w, \partial_{\bm{tt}} v)_{\partial K} \Big) .
      \end{align}
      Due to the fact that $v\in H^1_0( \Omega )\cap C^0(\overline{\Omega})$ and $w\in H^3_0(\Omega)$, it holds
     \begin{align}\label{eq:discreteFormIntegration6}
      -  (\Delta^3 w, v)
      = &\sum_{K\in \mcal T_h} (\nabla^3  w, \nabla^3 v)_K
      \notag\\
      &+ \sum_{e\in \mcal E(\mcal T_h)}  (\{\!\{ \partial_{\bm{nn}}\Delta w\}\!\} ,[\![ \partial_{\bm{n}} v]\!] )_{e}
       + \sum_{e\in \mcal E(\mcal T_h)}  ([\![ \partial_{\bm{n}} w]\!],\{\!\{ \partial_{\bm{nn}}\Delta v\}\!\}  )_{e}
      \notag\\
      & - \sum_{e\in \mcal E(\mcal T_h)}     ( \{\!\{\partial_{\bm{nnn}} w\}\!\}, [\![ \partial_{\bm{nn}} v]\!] )_{e}
         - \sum_{e\in \mcal E(\mcal T_h)}     ([\![ \partial_{\bm{nn}} w]\!], \{\!\{\partial_{\bm{nnn}} v\}\!\})_{e}
      \notag\\
      & - 2  \sum_{e\in \mcal E(\mcal T_h)}(\{\!\{\partial_{\bm{nnt}} w\}\!\}, [\![ \partial_{\bm{nt}} v]\!] )_{e}
        - 2 \sum_{e\in \mcal E(\mcal T_h)} ([\![ \partial_{\bm{nt}} w]\!] , \{\!\{\partial_{\bm {nnt}} v\}\!\})_{e}
      \notag\\
      & +\eta\sum_{e\in \mcal E(\mcal T_h)}   h_e^{-1}  ([\![ \partial_{\bm{nn}} w]\!] , [\![ \partial_{\bm{nn}} v]\!] )_{e}
      + \eta\sum_{e\in \mcal E(\mcal T_h)}h_e^{-3}([\![ \partial_{\bm{n}} w]\!] , [\![ \partial_{\bm{n}} v]\!] )_{e}   .
      \end{align}
      Here, $\eta>0$ is a sufficiently large constant.

      Besides, from \eqref{eq:discreteFormIntegration9}, \eqref{eq:discreteFormIntegration8} and \eqref{eq:discreteFormIntegration7}, one has
      \begin{align}\label{eq:discreteFormIntegration11}
     (\Delta^2 w, v)
      = &  \sum_{K\in \mcal T_h} (\nabla^2  w, \nabla^2 v)_K
        -    \sum_{K\in \mcal T_h} (  \partial_{\bm{nn}} w ,  \partial_{\bm{n}} v )_{\partial K}  -    \sum_{K\in \mcal T_h} (  \partial_{\bm{nt}} w ,  \partial_{\bm{t}} v )_{\partial K} .
      \end{align}
      Then, it holds
       \begin{align*}
        (\Delta^2 w, v)
      = &\sum_{K\in \mcal T_h} (\nabla^2  w, \nabla^2 v)_K
        - \sum_{e\in \mcal E( \mcal T_h)}   ( \{\!\{\partial_{\bm{nn}} w\}\!\}, [\![ \partial_{\bm{n}} v]\!] )_{e}
        \notag\\
        & - \sum_{e\in \mcal E( \mcal T_h)}   ( [\![ \partial_{\bm{n}} w]\!], \{\!\{\partial_{\bm{nn}} v\}\!\})_{e}
       +\eta\sum_{e\in \mcal E(\mcal T_h)} h_e^{-1}([\![ \partial_{\bm{n}} w]\!] , [\![ \partial_{\bm{n}} v]\!] )_{e}  .
      \end{align*}

For any $w,v\in V_h+H^3(\Omega)$, define
   \begin{align*}
  a_h(w,v) = &  \sum_{K \in \mcal{T}_h}(\nabla^3 w, \nabla^3 v)_K
  \notag\\
               & - \sum_{e \in \mcal{E}(\mcal T_h)}( \{\!\{\partial_{\bm{nnn}}w \}\!\}, [\![\partial_{\bm{nn}}v ]\!])_e
                - \sum_{e \in  \mcal{E}(\mcal T_h)}([\![\partial_{\bm{nn}}w ]\!], \{\!\{\partial_{\bm{nnn}}v\}\!\})_e
               \notag\\
               & - 2\sum_{e \in  \mcal{E}(\mcal T_h)}( \{\!\{\partial_{\bm{nnt}}w \}\!\},[\![\partial_{\bm{nt}}v ]\!])_e
               - 2\sum_{e \in  \mcal{E}(\mcal T_h)}([\![\partial_{\bm{nt}}w  ]\!] , \{\!\{\partial_{\bm{nnt}}v\}\!\})_e
               \notag\\
               & + \eta\sum_{e \in  \mcal{E}(\mcal T_h)} h_e^{-1}( [\![  \partial_{\bm{nn}} w  ]\!],[\![  \partial_{\bm{nn}} v ]\!])_e
                  + \eta\sum_{e \in  \mcal{E}(\mcal T_h)} h_e^{-3}( [\![  \partial_{\bm{n}} w ]\!], [\![  \partial_{\bm{n}} v ]\!])_e,\\
  b_h(w ,v) = &\sum_{K \in \mcal{T}_h}  (\nabla^2 w , \nabla^2 v)_K
                        - \sum_{e \in  \mcal{E}(\mcal T_h)} ( \{\!\{ \partial_{\bm{nn}} w \}\!\},  [\![  \partial_{\bm{n}} v ]\!])_e
                 \notag\\
                    &  - \sum_{e \in  \mcal{E}(\mcal T_h)} (  [\![  \partial_{\bm{n}} w ]\!] , \{\!\{ \partial_{\bm{nn}} v \}\!\})_e
                       + \eta\sum_{e \in \mcal{E}(\mcal T_h)} h_e^{-1}( [\![  \partial_{\bm{n}} w ]\!], [\![  \partial_{\bm{n}} v ]\!])_e.
  \end{align*}
 Here, the positive penalty parameter $\eta$ is independent of $h$ and $\iota$. Then the discretization formulation of \eqref{weakForm:GEK} is to find $w_h\in V_h$ such that
   \begin{equation}\label{eq:discreteStrangForm}
     \iota^2a_h(w_h,v_h) + b_h(w_h,v_h) =(f,v_h)\quad \forall\; v_h\in V_h. 
   \end{equation}
 Define the weighted norm on $ V_h$ by
    \[
     \|  v_h\|_{\iota,h}^2  \coloneqq  \interleave v_h \interleave_{2,h}^2 +  \iota^2 \interleave v_h \interleave_{3,h}^2,
    \]
   where,
    \begin{align*}
    \interleave  v_h \interleave_{2,h}^2 \coloneqq & |  v_h |_{2,h}^2+   \sum\limits_{e\in \mcal E(\mcal T_h)}h_e^{-1}\| [\![ \partial_{\bm n }  v_h]\!]  \|_{0,e}^2 ,
    \\
    \interleave v_h \interleave_{3,h}^2 \coloneqq & |  v_h |_{3,h}^2  +  \sum\limits_{e\in \mcal E(\mcal T_h)}h_e^{-1}\|[\![ \partial_{\bm{nn} }v_h]\!] \|_{0,e}^2   +  \sum\limits_{e\in \mcal E(\mcal T_h)}h_e^{-3}\|[\![  \partial_{\bm n }  v_h ]\!] \|_{0,e}^2  ,
    \\
   |  v_h |_{2,h}^2 \coloneqq &
    \sum_{K\in \mcal T_h} |  v_h|_{2,K}^2,
   \quad
    | v_h |_{3,h}^2 \coloneqq
    \sum_{K\in \mcal T_h}|   v_h|_{3,K}^2.
    \end{align*}

 \begin{lemma}\label{lm:GEKStrangstability}
     The discrete bilinear form $\iota^2a_h(\cdot,\cdot) + b_h(\cdot,\cdot)$ is continuous on $V_h\times V_h$ and the discrete coercivity holds:
        \begin{align}\label{eq:GEKStrangstability}
     \iota^2a_h(v,v) + b_h(v,v) \geq \frac 12  \|v\|_{\iota,h}\quad \forall\; v\in V_h.
    \end{align}
  \end{lemma}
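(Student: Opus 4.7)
The plan is to handle continuity and coercivity in parallel, since both reduce to estimating the face terms (the ``average-jump'' pairings) against the interior norms $\interleave\cdot\interleave_{2,h}$ and $\interleave\cdot\interleave_{3,h}$. The key technical tool is the discrete trace/inverse inequality \eqref{eq:GEKStranginverseP0}, which lets us bound $h_e^{1/2}\|\{\!\{\partial_{\bm{nnn}} v\}\!\}\|_{0,e}$, $h_e^{1/2}\|\{\!\{\partial_{\bm{nnt}} v\}\!\}\|_{0,e}$, and $h_e^{1/2}\|\{\!\{\partial_{\bm{nn}} v\}\!\}\|_{0,e}$ by the elementwise $H^3$ or $H^2$ seminorm on the patch $\omega_e$, because on each $K$ the function $v|_K$ is a cubic polynomial.

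For continuity, I would apply Cauchy--Schwarz edge by edge to every term in $a_h$ and $b_h$; the genuinely dangerous terms are the ones that do not already carry a weight of $h_e^{-1}$ or $h_e^{-3}$, namely the pairings $( \{\!\{\partial_{\bm{nnn}}w\}\!\},[\![\partial_{\bm{nn}}v]\!])_e$, $([\![\partial_{\bm{nn}}w]\!],\{\!\{\partial_{\bm{nnn}}v\}\!\})_e$, $(\{\!\{\partial_{\bm{nnt}}w\}\!\},[\![\partial_{\bm{nt}}v]\!])_e$, $([\![\partial_{\bm{nt}}w]\!],\{\!\{\partial_{\bm{nnt}}v\}\!\})_e$ (in $a_h$) and the analogous $\{\!\{\partial_{\bm{nn}}\cdot\}\!\}$--$[\![\partial_{\bm n}\cdot]\!]$ pairings in $b_h$. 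For each of these I would multiply and divide by an appropriate power of $h_e$ so the jump term is estimated by the corresponding penalty norm and the average term by $h_e^{1/2}\|\{\!\{\cdot\}\!\}\|_{0,e}$, then use \eqref{eq:GEKStranginverseP0} to absorb the latter into $|\cdot|_{3,h}$ or $|\cdot|_{2,h}$. Finite overlap of patches $\omega_e$ then gives the global bound by $\|w\|_{\iota,h}\|v\|_{\iota,h}$.

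For coercivity, I would set $w=v$ and note that the diagonal terms $\sum_K|v|_{3,K}^2$, $\sum_K|v|_{2,K}^2$ and all penalty terms appear with the correct sign and match the squared seminorms inside $\|v\|_{\iota,h}^2$. The remaining cross terms are estimated by Young's inequality: e.g.\
\[
\big|2( \{\!\{\partial_{\bm{nnn}}v\}\!\},[\![\partial_{\bm{nn}}v]\!])_e\big|
\le \varepsilon\, h_e^{-1}\|[\![\partial_{\bm{nn}}v]\!]\|_{0,e}^2+\tfrac{1}{\varepsilon}h_e\|\{\!\{\partial_{\bm{nnn}}v\}\!\}\|_{0,e}^2,
\]
and similarly for the $\partial_{\bm{nnt}}$/$\partial_{\bm{nt}}$ and $b_h$ terms. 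Using \eqref{eq:GEKStranginverseP0} once more, $\sum_e h_e\|\{\!\{\partial_{\bm{nnn}}v\}\!\}\|_{0,e}^2 \lesssim |v|_{3,h}^2$ with a shape-regularity constant $C_{\mrm{inv}}$, and similarly for the second-order terms. Choosing $\varepsilon$ small first and then $\eta$ large enough (depending only on $C_{\mrm{inv}}$ and $\gamma_0$, not on $h$ or $\iota$) absorbs the negative contributions into the positive diagonal and penalty terms, leaving at least one half of each of $\iota^2\interleave v\interleave_{3,h}^2$ and $\interleave v\interleave_{2,h}^2$.

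The main obstacle, as usual for $C^0$ interior penalty schemes of this order, is bookkeeping the constants so that a single universal $\eta$ controls simultaneously the $a_h$--induced cross terms (which are balanced against $\iota^2$-weighted quantities) and the $b_h$--induced cross terms (which are balanced against the unweighted $\interleave\cdot\interleave_{2,h}^2$). Because the penalty terms in $a_h$ and $b_h$ carry no $\iota$ factor in their definitions but $a_h$ enters with $\iota^2$, one must track which cross term is absorbed by which penalty; in practice the $\iota^2$ factor cancels cleanly in the Young-inequality splitting provided $\eta$ is chosen independently of $\iota$, which gives the stated robust coercivity constant $1/2$.
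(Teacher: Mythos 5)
Your overall route---Cauchy--Schwarz with $h_e$-weights, the polynomial trace/inverse estimate \eqref{eq:GEKStranginverseP0}, Young's inequality, and then absorption into the diagonal and penalty terms by taking $\eta$ large but independent of $h$ and $\iota$---is the same as the paper's proof, and your remark about tracking which cross term feeds which penalty is the right thing to worry about. However, there is one concrete gap in how you propose to handle the pairings $(\{\!\{\partial_{\bm{nnt}}\cdot\}\!\},[\![\partial_{\bm{nt}}\cdot]\!])_e$ and $([\![\partial_{\bm{nt}}\cdot]\!],\{\!\{\partial_{\bm{nnt}}\cdot\}\!\})_e$: the norm $\|\cdot\|_{\iota,h}$ carries penalty contributions only in $[\![\partial_{\bm{nn}}v]\!]$ and $[\![\partial_{\bm n}v]\!]$, so the phrase ``the jump term is estimated by the corresponding penalty norm'' has no meaning for $[\![\partial_{\bm{nt}}v]\!]$. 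If instead you push $[\![\partial_{\bm{nt}}v]\!]$ back to the elements via \eqref{eq:GEKStranginverseP0}, you incur $h_e^{-1/2}|v|_{2,\omega_e}$, and paired with the $h_e^{-1/2}|w|_{3,\omega_e}$ coming from the average factor you are left with an uncancelled $h_e^{-1}$, so neither continuity nor coercivity closes. The missing ingredient is that $V_h\subset H^1(\Omega)\cap C^0(\overline\Omega)$ forces $[\![\partial_{\bm t}v]\!]|_e=0$, hence $[\![\partial_{\bm{nt}}v]\!]|_e=\partial_{\bm t}[\![\partial_{\bm n}v]\!]|_e$, and a one-dimensional polynomial inverse inequality on $e$ then gives
\[
\|[\![\partial_{\bm{nt}}v]\!]\|_{0,e}\lesssim h_e^{-1}\|[\![\partial_{\bm n}v]\!]\|_{0,e},
\]
which is exactly what routes these cross terms into the $h_e^{-3}\|[\![\partial_{\bm n}v]\!]\|_{0,e}^2$ penalty. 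This observation is the single step specific to the $C^0$ interior penalty setting, and the paper records it explicitly as \eqref{eq:GEKstability1}.

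A secondary remark: with the Young split as you display it, $\varepsilon\,h_e^{-1}\|[\![\partial_{\bm{nn}}v]\!]\|_{0,e}^2+\varepsilon^{-1}h_e\|\{\!\{\partial_{\bm{nnn}}v\}\!\}\|_{0,e}^2$, the second summand is absorbed into $|v|_{3,h}^2$ only when $\varepsilon^{-1}C_{\mrm{inv}}\le 1/2$, i.e.\ $\varepsilon$ must be taken \emph{large}, not small; then $\eta$ is chosen larger still to absorb the $\varepsilon$-weighted jump. As written, the instruction ``choose $\varepsilon$ small first'' has the roles of $\varepsilon$ and $\varepsilon^{-1}$ reversed relative to your own display.
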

\begin{proof}
 Using the inverse inequality, one has
    \begin{align}
\|[\![\partial_{ \bm{nt} } v]\!]\|_{0,e}\lesssim  & h_e^{-1}\|[\![\partial_{\bm{n}} v ]\!]\|_{0,e}.
\label{eq:GEKstability1}
    \end{align}
   By the inverse inequality, \eqref{eq:GEKStranginverseP0} and \eqref{eq:GEKstability1}, there exists a generic constant $C_1>0$ dependent only on $\gamma_0$ such that
    \begin{align}\label{eq:GEKstability3}
       & -2 \sum_{e \in \mcal{E}(\mcal T_h)}( \{\!\{\partial_{\bm{nnn}}v\}\!\}, [\![\partial_{\bm{nn}}v ]\!])_e
        - 4\sum_{e \in  \mcal{E}(\mcal T_h)}( \{\!\{\partial_{\bm{nnt}}w \}\!\},[\![\partial_{\bm{nt}}v ]\!])_e
          \notag\\
             \leq &2\Big(\sum_{e \in \mcal{E}(\mcal T_h)}h_e\|\{\!\{\partial_{\bm{nnn}}v\}\!\}\|_{0,e}^2\Big)^{1/2} \Big( \sum_{e \in \mcal{E}(\mcal T_h)}h_e^{-1}\|[\![\partial_{\bm{nn}} v ]\!]\|_{0,e}^2\Big)^{1/2}
            \notag\\
             &+ 4\Big(\sum_{e \in \mcal{E}(\mcal T_h)}h_e\|\{\!\{\partial_{\bm{nnt}}v\}\!\}\|_{0,e}^2\Big)^{1/2}\Big(\sum_{e \in \mcal{E}(\mcal T_h)}h_e^{-1}\|[\![\partial_{\bm{nt}} v ]\!]\|_{0,e}^2\Big)^{1/2}
              \notag\\
              \leq &\frac 12 |v|_{3,h}^2 + C_1 \sum_{e \in \mcal{E}(\mcal T_h)}\Big( h_e^{-1}\|[\![\partial_{\bm{nn}} v ]\!]\|_{0,e}^2 +h_e^{-3}\|[\![\partial_{\bm{n}} v ]\!]\|_{0,e}^2\Big).
    \end{align}
    Similarly, it follows from \eqref{eq:GEKStranginverseP0} and the Cauchy-Schwarz inequality that
  \begin{align} \label{eq:GEKstability5}
        - 2\sum_{e \in  \mcal{E}(\mcal T_h)} ( \{\!\{ \partial_{\bm{nn}}v \}\!\},  [\![  \partial_{\bm{n}} v ]\!])_e
        \leq & \frac 12 |v|_{2,h}^2  + C_2\sum_{e \in \mcal{E}(\mcal T_h)} h_e^{-1}\|[\![\partial_{\bm{n}} v ]\!]\|_{0,e}^2  ,
  \end{align}
 where $C_2>0$ is a generic constant dependent only on $\gamma_0$.  Combining with \eqref{eq:GEKstability3}-\eqref{eq:GEKstability5}, we have
      \begin{align*}
  \iota^2 a_h(v,v)  + b_h(v ,v)
  \geq & \frac 12  \iota^2|v|_{3,h}^2 +\iota^2(\eta- C_1 )\sum_{e \in \mcal{E}(\mcal T_h)} h_e^{-1}\|[\![\partial_{\bm{nn}} v ]\!]\|_{0,e}^2
   \notag\\
   &+\iota^2 (\eta- C_1 ) \sum_{e \in \mcal{E}(\mcal T_h)} h_e^{-3}\|[\![\partial_{\bm{n}} v ]\!]\|_{0,e}^2
   \notag\\
   &+ \frac 12  |v|_{2,h}^2 + (\eta- C_2 ) \sum_{e \in \mcal{E}(\mcal T_h)} h_e^{-1}\|[\![\partial_{\bm{n}} v ]\!]\|_{0,e}^2.
  \end{align*}
  The discrete coercivity is derived by setting $\eta$ as $\frac 12 + \max\{C_1,C_2\}$ in the last inequality.

  We omit the proof of the continuity of $\iota^2a_h(\cdot,\cdot) + b_h(\cdot,\cdot)$, which is easy to check by \eqref{eq:GEKstability1}, the inverse inequality and the Cauchy-Schwarz inequality.
\end{proof}

\begin{remark}
  As shown in \cite{Hjelle2006,HuangLai2013}, we count the global DOFs in view of the relation:
  \[
  \lim\limits_{h\to 0} \frac{N_e}{N_v} = 3,\quad \text{and} \quad \lim\limits_{h\to 0} \frac{N_T}{N_v} = 2.
  \]
where $N_e = \#\mathcal E(\mathcal T_h)$, $N_v=\#\mathcal V(\mathcal T_h)$, $N_T = \#\mathcal T_h$. In \cite{Gudi2011sixthIPDG}, Gudi use the cubic Lagrange element to approximate the sixth-order problem, which has the same number of the local DoFs as the Hermite element in a triangle. However, the number of the global DoFs for the cubic Lagrange element space is $9N_v$, while one for $V_h$ is $5N_v$.
\end{remark}

\section{Error analysis}\label{Sect:InterpolationError}

\subsection{The quasi interpolation operator}
In order to derive some robust and optimal error estimates, we first define an weak interpolation operator, which is motivated by the definition of the average projection operators in \cite[Sect. 3.5]{Shi-Wang-2013}. Denote the weak interpolation operator from $H^2_0(\Omega )$ to $V_h$ by $I_h$. Then, for any $v\in H^2_0(\Omega)$, the DoFs of $I_h v$ are given as:
   \begin{align}
       I_hv(\zeta)  & =\frac {1}{\#\mcal T_\zeta}\sum_{K'\in \mcal T_{\zeta}}  (\Pi_{K'}^3v)(\zeta ),
      \label{weakInterpolation1}  \\
        \nabla I_h v(\zeta)  &= \frac {1}{\#\mcal T_{\zeta}} \sum_{K'\in \mcal T_{\zeta}}   (\nabla \Pi_{K'}^3v)(\zeta),
        \label{weakInterpolation2}\\
     I_h v(\zeta_K ) & =v(\zeta_K ),
  \label{weakInterpolation3}
      \end{align}
      where $\zeta\in \mathcal V^i(\mathcal T_h)$, $K\in \mathcal T_h$ and DoFs \eqref{weakInterpolation1}-\eqref{weakInterpolation2} vanish on boundary. Following the ideas for proving Theorem 3.5.3 in \cite{Shi-Wang-2013}, we have
 \begin{equation}\label{eq:GEKP3InterpolationError}
    |v-I_h v|_{m,h}\lesssim h ^{s-m}|v|_{s  }\quad\forall \; v\in H^{s}(\Omega)\cap H^2_0(\Omega),\, 0\leq m \leq s ,\, 2\leq s\leq 4.
  \end{equation}

%
 \begin{lemma}\label{lm:GEKStrangInterpolation}
  Let $w\in V\cap H^4(\Omega)$ and $w_0\in H^2_0(\Omega)\cap H^s(\Omega)$ $(s=3,4)$ be the solutions of problems \eqref{weakForm:GEK} and \eqref{weakForm:KPB}, respectively. Then, it holds
   \begin{align}
       \|w-I_h w\|_{\iota,h}&\lesssim \iota^{1/2}\|f\|_0 + h^{s-2} |w_0|_s,\label{eq:GEKInterEf}\\
	   \|w  -I_h w\|_{\iota,h} &\lesssim (\iota h^{s-3} + h^{s-2}  )   |w|_s .\label{eq:GEKInterEw}
   \end{align}
 \end{lemma}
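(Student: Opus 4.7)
\textbf{Step 1: Prove \eqref{eq:GEKInterEw}.} Since $w \in V \cap H^4(\Omega) \subset H^s(\Omega)$ for either $s=3$ or $s=4$, I would apply the interpolation estimate \eqref{eq:GEKP3InterpolationError} to obtain $|w - I_h w|_{m,h} \lesssim h^{s-m}|w|_s$ for $m=1,2,3$. The edge-jump contributions in $\interleave\cdot\interleave_{2,h}$ and $\interleave\cdot\interleave_{3,h}$ are then reduced to element-wise seminorms by applying the trace inequality \eqref{eq:trace1} to $\nabla(w-I_h w)$ and $\nabla^2(w-I_h w)$ on each triangle, which converts $h_e^{-1}\|[\![\partial_{\bm n}(w-I_h w)]\!]\|_{0,e}^2$ into $h_K^{-2}|w-I_h w|_{1,K}^2 + |w-I_h w|_{2,K}^2$ and analogously for the third-derivative jumps. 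Since $w \in V$ satisfies $w|_{\partial\Omega} = \partial_{\bm n}w|_{\partial\Omega} = \partial_{\bm{nn}}w|_{\partial\Omega} = 0$, boundary edges present no extra obstruction. Assembling these bounds yields $\interleave w - I_h w\interleave_{k,h} \lesssim h^{s-k}|w|_s$ for $k=2,3$, which is exactly \eqref{eq:GEKInterEw}.

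\textbf{Step 2: Decomposition for \eqref{eq:GEKInterEf}.} Using the linearity of $I_h$ on $H^2_0(\Omega)$, I would split $w - I_h w = [(w-w_0) - I_h(w-w_0)] + (w_0 - I_h w_0)$. Theorem \ref{th:GEKRegularity} supplies $|w-w_0|_2 + \iota|w|_3 + \iota^2|w|_4 \lesssim \iota^{1/2}\|f\|_0$, and \eqref{eq:regularityKPB} gives $|w_0|_3 \lesssim \|f\|_0$, so in particular $|w-w_0|_3 \leq |w|_3 + |w_0|_3 \lesssim \iota^{-1/2}\|f\|_0$. Applying \eqref{eq:GEKP3InterpolationError} and \eqref{eq:trace1} to the two pieces produces, for the boundary-layer correction, $\interleave (w-w_0) - I_h(w-w_0)\interleave_{2,h} \lesssim |w-w_0|_2$ and $\iota\interleave(w-w_0) - I_h(w-w_0)\interleave_{3,h} \lesssim \iota|w-w_0|_3$, both bounded by $\iota^{1/2}\|f\|_0$; and for the smooth limit, $\interleave w_0 - I_h w_0\interleave_{2,h} \lesssim h^{s-2}|w_0|_s$ and $\iota\interleave w_0 - I_h w_0\interleave_{3,h} \lesssim \iota h^{s-3}|w_0|_s$.

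\textbf{Step 3: Combining and the mesh-parameter balance.} For $s=3$ the residual term is $\iota|w_0|_3 \lesssim \iota\|f\|_0 \leq \iota^{1/2}\|f\|_0$, so the decomposition alone gives \eqref{eq:GEKInterEf}. For $s=4$ in the regime $h\geq\iota$, the residual is $\iota h|w_0|_4 \leq h^2|w_0|_4$, again absorbed into the target. In the complementary regime $h<\iota$ (still with $s=4$), the splitting becomes too coarse, so for the $\iota\interleave\cdot\interleave_{3,h}$ part I would bypass the decomposition entirely and use Theorem \ref{th:GEKRegularity} directly: $\iota |w - I_h w|_{3,h} \lesssim \iota h|w|_4 \lesssim h\iota^{-1/2}\|f\|_0 \leq \iota^{1/2}\|f\|_0$ thanks to $h\leq\iota$, with the jump terms bounded identically through \eqref{eq:trace1} and \eqref{eq:GEKP3InterpolationError} applied to $w\in H^4$. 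Combining the two regimes produces \eqref{eq:GEKInterEf} uniformly.

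\textbf{Main obstacle.} The crux is the $s=4$ balancing just described. The splitting estimate $\iota\interleave w_0 - I_h w_0\interleave_{3,h} \lesssim \iota h|w_0|_4$ is sharp exactly when $h\gtrsim\iota$ (so $\iota h \leq h^2$), while the direct estimate $\iota\interleave w-I_h w\interleave_{3,h}\lesssim h\iota^{-1/2}\|f\|_0$ is sharp exactly when $h\lesssim\iota$; neither single bound is robust by itself, and it is the dovetailing of the two that delivers the $\iota$-uniform constant in \eqref{eq:GEKInterEf}. A subsidiary technical point is that, unlike $w$, the limit $w_0$ only satisfies $w_0 = \partial_{\bm n}w_0 = 0$ on $\partial\Omega$, so $\partial_{\bm{nn}}w_0$ does not vanish on boundary edges; this is handled without issue by applying the element-wise trace inequality \eqref{eq:trace1} rather than relying on the jump being zero.
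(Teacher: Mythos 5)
Your proof is correct, and Step~1 (for \eqref{eq:GEKInterEw}) and the second-order part of Step~2 (for \eqref{eq:GEKInterEf}) match the paper's argument exactly. The genuine difference is in how you handle the third-order seminorm $\iota\interleave w-I_h w\interleave_{3,h}$ in \eqref{eq:GEKInterEf}. The paper does \emph{not} decompose this term through $w_0$ at all: it simply applies \eqref{eq:GEKP3InterpolationError} with $m=s=3$ to $w$ itself, combined with the trace inequality, to get $\iota^2\interleave w-I_h w\interleave_{3,h}^2 \lesssim \iota^2|w|_3^2$, and then invokes Theorem~\ref{th:GEKRegularity} ($\iota\|w\|_3\lesssim\iota^{1/2}\|f\|_0$) to conclude $\lesssim\iota\|f\|_0^2$ in one line. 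In contrast, you split $w-I_hw$ through $w_0$ in the third-order part and then apply the order-$s$ interpolation estimate to $w_0$, which produces the term $\iota h^{s-3}|w_0|_s$; for $s=4$ this forces the $h\gtrless\iota$ regime split that you identify as the ``main obstacle.'' That obstacle is therefore self-inflicted: it disappears entirely in the paper's route, and it would also disappear within your own route if you had applied the $m=s=3$ estimate to $w_0$ (giving $\iota|w_0|_3\lesssim\iota\|f\|_0\leq\iota^{1/2}\|f\|_0$) rather than the order-$s$ one. Your regime-split workaround is nonetheless valid: in the regime $h\geq\iota$ the residual $\iota h|w_0|_4\leq h^2|w_0|_4$ is absorbed, and in the regime $h<\iota$ you correctly switch to the direct bound $\iota h|w|_4\lesssim h\iota^{-1/2}\|f\|_0\leq\iota^{1/2}\|f\|_0$ using $\iota^2\|w\|_4\lesssim\iota^{1/2}\|f\|_0$. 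So the proposal is sound but takes a more circuitous path for the third-order piece than the paper does; the paper's approach buys a uniform, regime-free argument, while yours makes the interplay between $h$ and $\iota$ explicit at the cost of an extra case analysis.
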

\begin{proof}
 By the trace inequality \eqref{eq:trace1}, we have
 \begin{align*}
&  \sum_{e\in \mcal E(\mcal T_h)}h_e^{-1}\| [\![\partial_{\bm{nn}}(w-I_h w) ]\!]\|_e^2 + \sum_{e\in \mcal E(\mcal T_h)}h_e^{-3}\| [\![\partial_{\bm{n}}(w-I_h w) ]\!]\|_e^2
\notag\\
  \lesssim &\sum_{K\in \mcal   T_h }\big(h_K^{-4} |w-I_h w|_{1,K}^2 + h_K^{-2} |w-I_h w|_{2,K}^2  +  |w-I_h w|_{3,K} ^2    \big),
 \end{align*}
which combined with \eqref{eq:GEKP3InterpolationError} and \eqref{eq:GEKRegularity} yields
 \begin{align}
    \iota^2   \interleave w-I_h w \interleave_{3,h}^2
   \lesssim &   \iota^2 |w|_3^2\lesssim \iota \|f\|_0 ^2,
   \label{eq:GEKInterEf1}\\
    \iota^2   \interleave w-I_h w \interleave_{3,h}^2
   \lesssim &     \iota^2h^{2s-6}|w|_s^2.\label{eq:GEKInterEw1}
 \end{align}
 Similarly, employing the trace inequality \eqref{eq:trace1} and \eqref{eq:GEKP3InterpolationError}, one gets
  \begin{align} \label{eq:GEKInterEw2}
      \interleave w-I_h w \interleave_{2,h}^2
      \lesssim &\sum_{K\in \mcal   T_h } \big(h_K^{-2} |w-I_h w|_{1,K} ^2+   |w-I_h w|_{2,K} ^2  \big)
       \lesssim   h^{2s-4} |  w|_{4} ^2.
 \end{align}
 On the other hand, combining the trace inequality \eqref{eq:trace1}, \eqref{eq:GEKP3InterpolationError} and \eqref{eq:GEKRegularity} leads to
 \begin{align} \label{eq:GEKInterEf2}
      \interleave w-I_h w \interleave_{2,h}^2
      \lesssim &\sum_{K\in \mcal   T_h } \big(h_K^{-2} |w-I_h w|_{1,K} ^2+   |w-I_h w|_{2,K} ^2  \big)
      \notag\\
       \lesssim &\sum_{K\in \mcal   T_h } \big(h_K^{-2} |w-w_0 -I_h (w-w_0)|_{1,K}^2 +   |w-w_0 -I_h (w-w_0)|_{2,K}^2   \big)
       \notag\\
      & +\sum_{K\in \mcal   T_h } \big(h_K^{-2} |w_0 -I_hw_0 |_{1,K}^2 +   |w_0 -I_h w_0|_{2,K}^2   \big)
     \notag\\
       \lesssim &   |w-w_0|_2^2+ h^{2s-4} |w_0|_s^2    \lesssim   \iota \|f\|_0 ^2+ h^{2s-4} |w_0|_s^2.
 \end{align}
Finally, the estimate \eqref{eq:GEKInterEf} can be derived by \eqref{eq:GEKInterEf1} and \eqref{eq:GEKInterEf2}, while the estimate \eqref{eq:GEKInterEw} is derived by \eqref{eq:GEKInterEw1} and \eqref{eq:GEKInterEw2}.
\end{proof}

\subsection{The enriching operator}

To obtain the error estimate under minimal regularity of $w$, we also need to construct an enriching operator connecting $V_h$ to a $H^3$ conforming finite element space. Hence, we first introduce a $C^2$-finite element (cf. \cite{BrambleZlamal1970,Zenisek1970C2element}). Its shape function space is chosen as $V^c(K)\coloneqq\mbb P_9(K)$ and the DoFs consist of:
  \begin{equation}\label{C2P9dof:enrichingOperator}
          \left\{
        \begin{aligned}
        & \nabla^{\alpha} v(\zeta)\quad  \forall \; \zeta\in \mcal V(K),\;\alpha=0,1,2,3,4,\\
        &  \partial_{\bm{nn}} v(\zeta_{e,1}) ,\partial_{\bm{nn}} v(\zeta_{e,2})\quad  \forall \; e\in \mcal E(K),  \\
         &  \partial_{\bm{n}} v(\zeta_{e})\quad  \forall \; e\in \mcal E(K)\\
         & v(\zeta_K).
        \end{aligned}
        \right.
      \end{equation}
  Here, $\zeta_K$ is the barycenter of $K$ and for each edge $e=\overline{a_1^ea_2^e}\in \mcal E(K)$ with $ a_1^e,a_2^e\in \mcal V(K)$, $\zeta_{e}= (a_1^e+a_2^e)/2$ is the midpoint of $e$, $\zeta_{e,1}=(a_1^e+ 2a_2^e  )/3$, $\zeta_{e,2}=(2 a_1^e+a_2^e)/3$. Set
   \begin{align*}
      W_h^c = &\{v\in H^3(\Omega):\, v|_K \in V^c(K) \,\text{for all $K\in \mathcal T_h$, $\nabla^{\alpha}v$ is continuous at all vertices}\\
     & \quad\quad \quad  \quad\text{of triangulation $\mathcal T_h$ with $\alpha =0,1,2,3,4$, $\partial_{\bm n}v $ is continuous at the}\\
     & \quad \quad \quad  \quad\text{midpoints of $e\in \mathcal E(K)$, $\partial_{\bm{nn}} v$ is continuous at the points of}\\
     & \quad\quad \quad \quad \text {trisection of $e\in \mathcal E(K)$and all DoFs \eqref{C2P9dof:enrichingOperator} vanish on boundary}\}.
     \end{align*}
    Then, it holds $ W_h^c\subset H^3_0(\Omega)$ and
       \begin{equation}\label{eq:GEKJump}
   [\![\partial_{\bm{nn}}  v  ]\!]|_e =[\![\partial_{\bm{n}}  v  ]\!] |_e= [\![\partial_{\bm{nt}}  v  ]\!]|_e= [\![\partial_{\bm{tt}}  v  ]\!]|_e =0\quad \forall\;v\in W_h^c,\,e\in \mcal E(\mcal T_h).
  \end{equation}
  The enriching operator $E^c_h : V_h \to W_h^c$ is defined as follows:
            \begin{align}
        \nabla^{\alpha}E_h^c v(\zeta)  &= \frac {1}{\#\mcal T_{\zeta}} \sum_{K'\in \mcal T_{\zeta}}   (\nabla^{\alpha}v|_{K'})(\zeta), \quad\alpha=0,1,2,3,4,
        \label{C2P9dof:enrichingOperator1}\\
     \partial_{\bm{nn}}E_h^c v(\zeta_{e,i})& =\frac {1}{\#\mcal T_e}\sum_{K'\in \mcal T_e}\partial_{\bm{nn}}(v|_{K'})(\zeta_{e,i}), \quad i=1,2,
      \label{C2P9dof:enrichingOperator2}  \\
     \partial_{\bm{n}}E_h^c v(\zeta_e)& =\frac {1}{\#\mcal T_e}\sum_{K'\in \mcal T_e}\partial_{\bm{n}}(v|_{K'})(\zeta_e),
     \label{C2P9dof:enrichingOperator3}\\
     E_h^c v(\zeta_K ) & =v(\zeta_K ),
     \label{C2P9dof:enrichingOperator4}
      \end{align}
where, $K\in \mcal T_h$, $e\in \mcal E^i(\mcal T_h)$ and $\zeta\in \mcal V^i(\mcal T_h)$. Noted that the DoFs of $E_h^c v$ vanish on boundary $\partial\Omega$.

\begin{lemma}
For any $v\in V_h$, it holds
  \begin{align}
  \sum_{K\in \mcal T_h}h_K^{-4}  \|v - E_h^c v\|_{0,K}^2+  \sum_{K\in \mcal T_h} h_K \|\partial_{\bm{nn}}(v - E_h^c v)\|_{0,\partial K}^2\quad&
  \notag\\
+   \sum_{K\in \mcal T_h}h_K^{-1} \|\partial_{\bm{n}}(v - E_h^c v)\|_{0,\partial K}^2  \lesssim& \interleave  v_h \interleave_{2,h}^2,
   \label{eq:GEKenrichingError1}  \\
   \sum_{K\in \mcal T_h}  h_K^{-6}  \|v - E_h^c v\|_{0,K}^2+  \sum_{K\in \mcal T_h}h_K  \|\partial_{\bm{nnn}}(v - E_h^c v)\|_{0,\partial K}^2\quad\quad\quad\quad\quad &
\notag\\
+  \sum_{K\in \mcal T_h}h_K^{-1} \|\partial_{\bm{nn}}(v - E_h^c v)\|_{0,\partial K}^2+   \sum_{K\in \mcal T_h}h_K^{-3} \|\partial_{\bm{n}}(v - E_h^c v)\|_{0,\partial K}^2\lesssim &\interleave  v_h \interleave_{3,h}^2,
  \label{eq:GEKenrichingError2} \\
\|E_h^c v \|_{\iota,h} \lesssim &\| v \|_{\iota,h} .
\label{eq:GEKenrichingError3}
\end{align}
\end{lemma}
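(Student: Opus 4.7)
My plan is to reduce \eqref{eq:GEKenrichingError1}--\eqref{eq:GEKenrichingError2} to elementwise $L^2$ estimates of $r:=v-E_h^c v$, to bound $\|r\|_{0,K}^2$ through the defect of the $C^2$ element degrees of freedom, and finally to deduce the stability \eqref{eq:GEKenrichingError3} by a triangle inequality. Since $r|_K\in\mathbb P_9(K)$, the polynomial inverse trace inequality $\|\nabla^\beta r\|_{0,\partial K}^2\lesssim h_K^{-1-2\beta}\|r\|_{0,K}^2$ dominates every trace term on the left-hand side of \eqref{eq:GEKenrichingError1} (resp.\ \eqref{eq:GEKenrichingError2}) by a multiple of $h_K^{-4}\|r\|_{0,K}^2$ (resp.\ $h_K^{-6}\|r\|_{0,K}^2$), so it will suffice to prove the two volume estimates $\sum_K h_K^{-4}\|r\|_{0,K}^2\lesssim\interleave v\interleave_{2,h}^2$ and $\sum_K h_K^{-6}\|r\|_{0,K}^2\lesssim\interleave v\interleave_{3,h}^2$. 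For this I would use reference-element norm equivalence for $\mathbb P_9$ together with affine scaling to obtain
\[
\|r\|_{0,K}^2 \lesssim h_K^2\sum_{\zeta\in\mathcal V(K)}\sum_{\alpha=0}^{4} h_K^{2\alpha}|\nabla^\alpha r(\zeta)|^2 + h_K^{4}\sum_{e\in\mathcal E(K)}|\partial_{\bm n}r(\zeta_e)|^2 + h_K^{6}\sum_{e,i}|\partial_{\bm n\bm n}r(\zeta_{e,i})|^2 + h_K^2|r(\zeta_K)|^2,
\]
the weights being dictated by the order of each DoF of the $C^2$ element.

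Next I would classify the DoF defects of $r$ from \eqref{C2P9dof:enrichingOperator1}--\eqref{C2P9dof:enrichingOperator4}. The centroid defect and the vertex defects of orders $\alpha\in\{0,1\}$ all vanish (the first by \eqref{C2P9dof:enrichingOperator4}, the others because $v$ and $\nabla v$ are continuous at every vertex for $v\in V_h$). The order-$4$ vertex defect vanishes because $v|_K\in\mathbb P_3$ forces $\nabla^4 v|_K=0$. For $\alpha\in\{2,3\}$ the defect reduces to a bounded weighted sum of the pairwise jumps $\nabla^\alpha v|_K(\zeta)-\nabla^\alpha v|_{K'}(\zeta)$ over $K'\in\mathcal T_\zeta$, and the edge-midpoint and edge-trisection defects reduce to the pointwise values $[\![\partial_{\bm n}v]\!](\zeta_e)$ and $[\![\partial_{\bm n\bm n}v]\!](\zeta_{e,i})$. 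For these last two the polynomial inverse inequality on $e$ gives $|[\![\partial_{\bm n}v]\!](\zeta_e)|^2\lesssim h_e^{-1}\|[\![\partial_{\bm n}v]\!]\|_{0,e}^2$ and likewise for $\partial_{\bm n\bm n}$, which after the correct $h_K$-powers land directly in $\interleave v\interleave_{2,h}^2$ or $\interleave v\interleave_{3,h}^2$.

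The main obstacle will be controlling the vertex jumps of $\nabla^2 v$ and $\nabla^3 v$ against the weaker norm $\interleave v\interleave_{2,h}$, which contains no $[\![\partial_{\bm n\bm n}v]\!]$-term. I plan to exploit two polynomial facts. First, $\nabla^3 v|_K$ is a constant, so $|\nabla^3 v|_K|^2\lesssim h_K^{-2}|v|_{3,K}^2$ by norm equivalence on constants, and its contribution to the $h_K^{-4}$-sum is absorbed into $|v|_{2,h}^2$ through the inverse inequality $|v|_{3,K}^2\lesssim h_K^{-2}|v|_{2,K}^2$. Second, the $C^0$-continuity of $v\in V_h$ enforces $[\![\partial_{\bm t\bm t}v]\!]=0$ and $[\![\partial_{\bm n\bm t}v]\!]=\partial_{\bm t}[\![\partial_{\bm n}v]\!]$ on each $e$, so the vertex jump of $\nabla^2 v$ is controlled by $h_e^{-1}\|[\![\partial_{\bm n\bm n}v]\!]\|_{0,e}^2 + h_e^{-3}\|[\![\partial_{\bm n}v]\!]\|_{0,e}^2$ after a further polynomial inverse inequality; in the bound \eqref{eq:GEKenrichingError1} the residual $h_K\|[\![\partial_{\bm n\bm n}v]\!]\|_{0,e}^2$ is transferred back into $|v|_{2,h}^2$ via the inverse trace $\|\partial_{\bm n\bm n}v\|_{0,e}^2\lesssim h_K^{-1}|v|_{2,K}^2$ for polynomials. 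Boundary DoFs are handled analogously using the convention of Section~\ref{SectNotation}: $V_h$ enforces $v(\zeta)=\nabla v(\zeta)=0$ at boundary vertices and $W_h^c$ kills all its boundary DoFs, so the corresponding defects are just the one-sided boundary traces, already accounted for by the boundary edges in $\interleave v\interleave_{2,h}$ and $\interleave v\interleave_{3,h}$.

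Finally, for \eqref{eq:GEKenrichingError3} I would use that $E_h^c v\in W_h^c\subset H^3_0(\Omega)$ has zero jumps on every edge, so the jump part of $\|E_h^c v\|_{\iota,h}$ vanishes and the triangle inequality gives $\|E_h^c v\|_{\iota,h}\le\|v\|_{\iota,h}+\|v-E_h^c v\|_{\iota,h}$. The second term is then controlled by \eqref{eq:GEKenrichingError1}--\eqref{eq:GEKenrichingError2} together with the polynomial inverse estimates $|v-E_h^c v|_{2,h}^2\lesssim\sum_K h_K^{-4}\|v-E_h^c v\|_{0,K}^2$ and $|v-E_h^c v|_{3,h}^2\lesssim\sum_K h_K^{-6}\|v-E_h^c v\|_{0,K}^2$.
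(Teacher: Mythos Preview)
Your plan is correct and follows essentially the same route as the paper. The paper also reduces all trace terms to $h_K^{-4}\|r\|_{0,K}^2$ (resp.\ $h_K^{-6}\|r\|_{0,K}^2$) via the polynomial inverse trace inequality \eqref{eq:GEKStranginverseP0}, then bounds $\|r\|_{0,K}^2$ by the scaled DoF defects, converts vertex point values into edge $L^2$ norms of jumps, and finally absorbs the higher-order jumps $\|[\![\nabla^\alpha v]\!]\|_{0,e}$ into $|v|_{2,\omega_\zeta}$ (resp.\ $|v|_{3,\omega_\zeta}$) by the inverse inequalities exactly as you describe; the only cosmetic difference is that the paper bounds the full tensor jump $\|[\![\nabla^\alpha v]\!]\|_{0,e}$ directly by the volume seminorm rather than first splitting into $\bm{nn}/\bm{nt}/\bm{tt}$ components.
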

\begin{proof}
  By \eqref{C2P9dof:enrichingOperator1}-\eqref{C2P9dof:enrichingOperator4} and scaling argument, we have
  \begin{align}\label{GEK:enrichError1}
    \|  v  - E_h^c v\|_{0,K}^2 \lesssim
    & h_K^{2+2\alpha} \sum_{\alpha=2}^4 \sum_{\zeta\in \mcal V(K)}\big|\nabla^{\alpha} v|_K - \nabla^{\alpha}E_h^c v \big|^2(\zeta )
    +  h_K^{4} \sum_{e\in \mcal E(K)}\big|\partial_{\bm{n}} v|_K - \partial_{\bm{n}}E_h^c v \big|^2(\zeta_{e} )
    \notag\\
    &
    +h_K^{6} \sum_{i=1}^2\sum_{e\in \mcal E(K)}\big|\partial_{\bm{nn}} v|_K - \partial_{\bm{nn}}E_h^c v \big|^2(\zeta_{e,i} )
    \notag\\
    \lesssim
    & h_K^{1+2\alpha} \sum_{\alpha=2}^4 \sum_{\zeta\in \mcal V(K)} \sum_{e\in \mcal E(\mcal T_h),\zeta\in \partial e}\|[\![  \nabla^{\alpha} v ]\!]\|_{0,e}^2
     +  h_K^{3} \sum_{e\in \mcal E(K)}\|[\![ \partial_{\bm{n}} v]\!]\|_{0,e}^2
      \notag\\
      &+  h_K^{5} \sum_{e\in \mcal E(K)}\|[\![ \partial_{\bm{nn}} v]\!]\|_{0,e}^2,
  \end{align}
  Applying the inverse inequality, \eqref{GEK:enrichError1} and \eqref{eq:GEKStranginverseP0}, we can get
   \begin{align}\label{GEK:enrichError2}
   \|  v  - E_h^c v\|_{0,K}^2
    \lesssim
    & h_K^{2\alpha} \sum_{\alpha=2}^4 \sum_{\zeta\in \mcal V(K)}\sum_{K'\in \mcal T_{\zeta}}   \| \nabla^{\alpha} v  \|_{0,K'}^2
     +  h_K^{3} \sum_{e\in \mcal E(K)}\|[\![ \partial_{\bm{n}} v]\!]\|_{0,e}^2
      \notag\\
       \lesssim
    & h_K^{4}  \sum_{\zeta\in \mcal V(K)} \sum_{K'\in \mcal T_{\zeta}}    | v  |_{2,K'}^2
     +  h_K^{3} \sum_{e\in \mcal E(K)}\|[\![ \partial_{\bm{n}} v]\!]\|_{0,e}^2.
  \end{align}

  On the other hand, due to the fact that $V_h\in H^1_0(\Omega)\cap C(\overline{\Omega})$ and using \eqref{eq:discreteFormIntegration2} and the inverse inequality, we can obtain
  \begin{align*}
    &\|[\![ \nabla^2v ]\!]\|_{0,e} \lesssim  \|[\![ \partial_{\bm{nn}} v]\!]\|_{0,e} + \|[\![ \partial_{\bm{nt}} v]\!]\|_{0,e} \lesssim  \|[\![ \partial_{\bm{nn}} v]\!]\|_{0,e} + h_e^{-1}\|[\![ \partial_{\bm{n} }v]\!]\|_{0,e} ,
  \end{align*}
  and
     \begin{align*}
     \|  \nabla^4 v \|_{0,K} \lesssim  h_K^{-1}\|  \nabla^3v \|_{0,K}   ,
  \end{align*}
 which together with \eqref{GEK:enrichError1} yields
  \begin{align}\label{GEK:enrichError21}
    \|  v  - E_h^c v\|_{0,K}^2
       \lesssim
    & h_K^{6}  \sum_{\zeta\in \mcal V(K)}   \sum_{K'\in \mcal T_{\zeta}}| v  |_{3,K'}^2
     +  h_K^{3} \sum_{e\in \mcal E(K)}\|[\![ \partial_{\bm{n}} v]\!]\|_{0,e}^2
      \notag\\
      &+  h_K^{5} \sum_{e\in \mcal E(K)}\|[\![ \partial_{\bm{nn}} v]\!]\|_{0,e}^2.
  \end{align}

Summing over all triangles in $\mcal T_h$ and employing \eqref{eq:GEKStranginverseP0} and the inverse inequality, we can derive the estimate \eqref{eq:GEKenrichingError1} from \eqref{GEK:enrichError2}, and we can derive the estimate \eqref{eq:GEKenrichingError2} from \eqref{GEK:enrichError21}. Moreover, the inequality \eqref{eq:GEKenrichingError3} can be obtained by \eqref{eq:GEKenrichingError1}, \eqref{eq:GEKenrichingError2} and the inverse inequality. 

\end{proof}

\subsection{Error estimates}\label{Sect:ErrorAnalysis}

\begin{lemma} \label{lm:GEKerrorStrang}
For any $e\in \mcal E^i(\mcal T_h)$ and $v_h\in V_h$, it holds
  \begin{align}
    \|f\|_{0,K} \lesssim&\|f-\Pi_K^0f\|_{0,K} +  \iota^2h_K^{-3}|w-v_h|_{3,K} + h_K^{-2}|w-v_h|_{2,K},
    \label{eq:GEKerrorf_Deltavh}
    \\
    \iota^{4}\| [\![  \partial_{\bm{nnn}} v_h ]\!]\|_{0,e}^2 \lesssim & h_e^{5} \sum_{K\in \mcal T_e} \|f-\Pi_K^0f\|_{0,K}^2  +  \iota^4h_e^{-1} \sum_{K\in \mcal T_e}|w-v_h|_{3,K}^2
    \notag\\
    &+h_e  \sum_{K\in \mcal T_e}|w-v_h|_{2,K}^2,
    \label{eq:GEKerrorVhnnn}
      \\
    \| [\![  \partial_{\bm{nn}} v_h ]\!]\|_{0,e}^2 \lesssim& h_e^{3} \sum_{K\in \mcal T_e}\|f-\Pi_K^0f\|_{0,K}^2   +  \iota^4 h_e^{-3}\sum_{K\in \mcal T_e}|w-v_h|_{3,K}^2
    \notag\\
    & + h_e^{-1} \sum_{K\in \mcal T_e}|w-v_h|_{2,K}^2.
    \label{eq:GEKerrorVhnn}
  \end{align}
\end{lemma}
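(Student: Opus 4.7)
My plan is to prove the three estimates in turn using the classical bubble-function technique of Verfürth, adapted to the sixth-order operator in the spirit of Gudi's treatment of the triharmonic equation in \cite{Gudi2011sixthIPDG}. Throughout, I will exploit the crucial fact that $v_h|_K \in \mathbb P_3(K)$ implies $\Delta^2 v_h = \Delta^3 v_h = 0$ on each $K$.

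\emph{Step 1 (interior residual, inequality \eqref{eq:GEKerrorf_Deltavh}).} Take the cubed barycentric bubble $b_K = (\lambda_1\lambda_2\lambda_3)^3 \in H^3_0(K)$, which together with its first two normal derivatives vanishes on $\partial K$. By finite-dimensional norm equivalence on $\mathbb P_0(K)$ and a polynomial inverse estimate,
\[
\|\Pi_K^0 f\|_{0,K}^2 \lesssim (\Pi_K^0 f, b_K\Pi_K^0 f)_K, \qquad |b_K\Pi_K^0 f|_{m,K} \lesssim h_K^{-m}\|\Pi_K^0 f\|_{0,K} \quad (0\leq m\leq 3).
\]
Split $(\Pi_K^0 f, b_K\Pi_K^0 f)_K = (f, b_K\Pi_K^0 f)_K + (\Pi_K^0 f-f, b_K\Pi_K^0 f)_K$ and test the strong PDE against $b_K\Pi_K^0 f$. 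Since $b_K\Pi_K^0 f \in H^3_0(K)$ all boundary terms vanish upon integration by parts, and since $\Delta^2 v_h = \Delta^3 v_h = 0$ one may freely substitute $w \to w-v_h$, giving
\[
(f, b_K\Pi_K^0 f)_K = (\nabla^2(w-v_h), \nabla^2(b_K\Pi_K^0 f))_K + \iota^2(\nabla^3(w-v_h), \nabla^3(b_K\Pi_K^0 f))_K.
\]
Cauchy--Schwarz and the inverse estimate bound $\|\Pi_K^0 f\|_{0,K}$, and the triangle inequality finishes \eqref{eq:GEKerrorf_Deltavh}.

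\emph{Step 2 (edge bubble construction).} For $e = \overline{K^+}\cap\overline{K^-}$ I will build an edge bubble $\phi \in H^3_0(\omega_e)$ with prescribed traces on $e$. On each $K^\pm$ take $\phi$ of the form $(\lambda_1\lambda_2)^3\, g^{\pm}(\lambda_3)$, where $\lambda_3$ is the barycentric coordinate of the vertex of $K^\pm$ opposite $e$ and $g^\pm$ is a one-variable polynomial. The factor $(\lambda_1\lambda_2)^3$ enforces $H^3_0$-vanishing on the two edges of $\partial\omega_e\setminus e$ meeting each endpoint of $e$, while the freedom in $g^\pm$ allows me both to match $\phi$, $\partial_{\bm n}\phi$, $\partial_{\bm{nn}}\phi$ continuously across $e$ and to realize any two of these three traces as zero with the third equal to a prescribed edge function. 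A scaling argument then controls $\|\phi\|_{0,\omega_e}$, $|\phi|_{2,\omega_e}$ and $|\phi|_{3,\omega_e}$ by the prescribed trace and appropriate powers of $h_e$.

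\emph{Step 3 (the two jump estimates).} For \eqref{eq:GEKerrorVhnnn}, set $J_e = [\![\partial_{\bm{nnn}} v_h]\!]$ (a constant on $e$) and choose $\phi$ with $\phi|_e = \partial_{\bm n}\phi|_e = 0$ and $\partial_{\bm{nn}}\phi|_e = J_e\beta_e$, with $\beta_e$ a fixed edge bubble vanishing at $\partial e$. Element-wise integration by parts on $\omega_e$, combined with $\Delta^2 v_h = \Delta^3 v_h = 0$, annihilates every jump contribution except $\iota^2([\![\partial_{\bm{nnn}} v_h]\!], \partial_{\bm{nn}}\phi)_e$; notably the term involving the constant $[\![\partial_{\bm{nnt}} v_h]\!]$ disappears because a tangential integration by parts on $e$ reduces it to $\int_e \partial_{\bm t}(J_e\beta_e)\,ds = 0$. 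Testing the continuous PDE against $\phi \in H^3_0(\Omega)$ then yields
\[
\iota^2(J_e,\partial_{\bm{nn}}\phi)_e = (f,\phi)_{\omega_e} - \iota^2(\nabla^3(w-v_h),\nabla^3\phi)_{\omega_e} - (\nabla^2(w-v_h),\nabla^2\phi)_{\omega_e},
\]
with LHS equivalent to $\iota^2\|J_e\|_{0,e}^2$. On the right I bound $\|f\|_{0,K}$ by the already-proved \eqref{eq:GEKerrorf_Deltavh} (introducing $\|f-\Pi_K^0 f\|$), apply Cauchy--Schwarz with the scaling estimates, then divide by $|J_e|$ and square to obtain \eqref{eq:GEKerrorVhnnn}. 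Inequality \eqref{eq:GEKerrorVhnn} is obtained identically with $\chi_e = [\![\partial_{\bm{nn}} v_h]\!]$ and $\phi$ chosen so that $\phi|_e = \partial_{\bm{nn}}\phi|_e = 0$ and $\partial_{\bm n}\phi|_e = \chi_e\beta_e$; the different powers of $h_e$ in \eqref{eq:GEKerrorVhnn} versus \eqref{eq:GEKerrorVhnnn} come directly from the different scalings of $|\phi|_{m,\omega_e}$ produced by prescribing $\partial_{\bm n}\phi|_e$ instead of $\partial_{\bm{nn}}\phi|_e$.

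The main obstacle is the construction of an edge bubble $\phi \in H^3_0(\omega_e)$ with precisely controlled traces: the sixth-order operator demands $C^2$-continuity across $e$ together with the vanishing of $\phi$, $\partial_{\bm n}\phi$, $\partial_{\bm{nn}}\phi$ on $\partial\omega_e\setminus e$. The $(\lambda_1\lambda_2)^3\, g^\pm(\lambda_3)$ ansatz handles this cleanly, and the a priori worrisome jump $[\![\partial_{\bm{nnt}}v_h]\!]$ in the edge identity conveniently integrates to zero thanks to the endpoint-vanishing of $\beta_e$. All remaining scaling and absorption steps are routine.
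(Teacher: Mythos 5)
Your overall strategy --- interior bubble $b_K^3\Pi_K^0 f$ for the element residual, edge bubbles supported in $\omega_e$ for the jumps, testing the strong PDE, and scaling --- is exactly the paper's. There are, however, two issues, the second of which is a genuine gap.

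First, a small misattribution: for \eqref{eq:GEKerrorVhnnn} your bubble has $\phi|_e=\partial_{\bm n}\phi|_e=0$, so $\partial_{\bm{nt}}\phi|_e=\partial_{\bm t}(\partial_{\bm n}\phi)|_e$ vanishes pointwise and the $[\![\partial_{\bm{nnt}}v_h]\!]$ term disappears without any tangential integration by parts. The tangential IBP argument (with $\int_e\partial_{\bm t}(\cdot)\,ds=0$) is only needed in the case of \eqref{eq:GEKerrorVhnn}, where $\partial_{\bm n}\phi|_e\neq 0$; the paper uses it there, though implicitly.

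Second, and more seriously, your edge-bubble ansatz $(\lambda_1\lambda_2)^3\,g^\pm(\lambda_3)$ with $g^\pm$ a one-variable polynomial cannot realize the traces you prescribe for \eqref{eq:GEKerrorVhnn}. Every trace produced by this ansatz on $e$ is a multiple of $(\lambda_1\lambda_2)^3|_e$ by a \emph{constant} (a value or derivative of $g^\pm$ at $0$). That is fine for \eqref{eq:GEKerrorVhnnn} since $[\![\partial_{\bm{nnn}}v_h]\!]$ is constant on $e$ for $v_h\in\mathbb P_3$. But $\chi_e=[\![\partial_{\bm{nn}}v_h]\!]$ is in general a non-constant \emph{linear} function on $e$, so the prescribed trace $\partial_{\bm n}\phi|_e=\chi_e\beta_e$ is not of the form (constant)$\cdot\beta_e$ and lies outside the range of your ansatz. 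Moreover, even after patching this by multiplying with the constant normal extension $J_{2,\omega_e}$ of $\chi_e$ (as the paper does), the additional constraint $\partial_{\bm{nn}}\phi|_e=0$ you impose is obstructed by the Leibniz cross term $2(\partial_{\bm n}(\lambda_1\lambda_2)^3)\,J\,g'(0)\,\partial_{\bm n}\lambda_3$, which is a non-constant multiple of $(\lambda_1\lambda_2)^2$ rather than $(\lambda_1\lambda_2)^3$; no choice of a one-variable $g^\pm$ can cancel it. The paper sidesteps this entirely by choosing $\varphi_e=\varrho_e^3\,J_{2,\omega_e}\,\bm l_e$, which deliberately has $\partial_{\bm{nn}}\varphi_e|_e\neq 0$, and then reduces the leftover term $\iota^2\sum_K(\nabla^3 v_h,\nabla^3\varphi_e)_K=\iota^2([\![\partial_{\bm{nnn}}v_h]\!],\partial_{\bm{nn}}\varphi_e)_e$ to the already-proved \eqref{eq:GEKerrorVhnnn}. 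You should either adopt that bootstrap, or replace your bubble with one that genuinely realizes the three required traces (e.g.\ by working in a piecewise $C^2$ macro-element space on $\omega_e$ and arguing via degrees of freedom rather than an explicit separable formula). As written, the claim that \eqref{eq:GEKerrorVhnn} ``is obtained identically'' does not go through.
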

\begin{proof}
  Set
    \begin{align*}
    \phi_K&=
        \begin{cases}
 b_K^3 \Pi_K^0f & \textrm{in}~K ,\\
          0&\textrm{in}~\Omega\backslash K.
        \end{cases}
  \end{align*}
  Obviously, $\phi_K|_K\in H^3_0(K)$ and we have from scaling argument that
  \begin{align}\label{eq:GEKphiK}
    \|\phi_K\|_{0,K}^2\lesssim \|\Pi_K^0f\|_0^2\lesssim(\Pi_K^0f , \phi_K)_K .
  \end{align}
  By integration by parts and \eqref{weakForm:GEK}, it holds for any $v_h \in V_h$ that
  \begin{align*}
    \|\Pi_K^0f\|_{0,K}^2 \lesssim &(\Pi_K^0f-f, \phi_K)_K+ ( f-\Delta^2 v_h +\iota^2\Delta^3v_h, \phi_K)_K
  \notag\\
     =&(\Pi_K^0f-f, \phi_K)_K + \iota^2(\nabla^3w-\nabla^3v_h,\nabla^3\phi_K)_K  + (\nabla^2w-\nabla^2v_h,\nabla^2\phi_K)_K
.
  \end{align*}
  Then, using the inverse inequality and the Cauchy-Schwarz inequality, we can obtain
    \[
    \|\Pi_K^0f\|_{0,K}^2\lesssim ( \|f-\Pi_K^0f\|_{0,K}+\iota^2h_K^{-3}|w-v_h|_{3,K} + h_K^{-2}|w-v_h|_{2,K})\|\phi_K\|_{0,K}.
  \]
We get \eqref{eq:GEKerrorf_Deltavh} from \eqref{eq:GEKphiK} and the triangle inequality readily.

  For each interior edge $e\in\mcal E^i(\mcal T_h)$, there exist two adjacent triangles, $K_1$ and $K_2$ in $\mcal T_h$ sharing a common edge $e$ and define a bubble function as follow:
  \begin{align*}
    \varrho_{e} &=
        \begin{cases}
  \lambda_{K_1,1}\lambda_{K_1,2} \lambda_{K_2,1}\lambda_{K_2,2} & \textrm{in}~\omega_e ,\\
          0&\textrm{in}~\Omega\backslash \omega_e ,
        \end{cases}
  \end{align*}
 where, $\lambda_{K_1,i}$ and $\lambda_{K_2,i}$ are barycentric coordinates of $K_1$ and $K_2$ related to the vertices of $e$, $i=1,2$. Besides, the line where $e$ lying in can be represented by $\bm l_e =\bm n_e\cdot \bm x + C_e $ with a constant $C_e$. A simple calculation shows $\bm l_e |_e =0$, $\partial_{\bm n_e}\bm l_e |_e =1$ and
 \begin{equation}\label{eq:GEKeq_e}
 \big|\bm l_{e} |_{\omega_e}\big| = h_e\Big|\frac{( \bm n_e\cdot \bm x + C_e)|_{\omega_e}}{h_e}\Big| \lesssim h_e.
 \end{equation}

 We now prove \eqref{eq:GEKerrorVhnnn}. Define $J_{1,\omega_e}$ by extending the jump $[\![  \partial_{\bm{nnn}} v_h ]\!]|_e$ to $\omega_e$ constantly along the normal to $e$. Take $\phi_e =\varrho_{e}^3J_{1,\omega_e} \bm l_e ^2  $. Then, $\phi_e  \in H^3_0(\omega_e)$, $\phi_e |_e=\partial_{\bm n_e}\phi_e|_e= 0$ and
\begin{align*}
  &\partial_{\bm{nn}} \phi_e|_e
  = (\varrho_{e}^3J_{1,\omega_e} \partial_{\bm{nn}}  \bm l_e ^2 )|_e = 2\varrho_{e}^3[\![  \partial_{\bm{nnn}} v_h ]\!]|_e.
\end{align*}
 By integrating by parts, \eqref{model:GEK}, scaling argument and the similar arguments for obtaining \eqref{eq:discreteFormIntegration5}, it holds for any $e\in \mcal E^i(\mcal T_h)$ that
   \begin{align}\label{eq:GEKnnnB2}
       \iota^2 \| [\![  \partial_{\bm{nnn}} v_h ]\!]\|_{0,e}^2\lesssim&   \iota^2  ([\![  \partial_{\bm{nnn}} v_h ]\!], \partial_{\bm{nn}}\phi_e)_e  =   \iota^2  \sum_{K\in \mcal T_e}(\partial_{\bm{nnn}}v_h , \partial_{\bm{nn}}\phi_e)_{\partial K}
    \notag\\
    = &    \iota^2 \sum_{K\in \mcal T_e}  (\nabla^3(v_h-w),\nabla^3\phi_e)_{K} + (f, \phi_e)_{\omega_e} -(\Delta^2 w , \phi_e)_{\omega_e}
    \notag\\
    =& \iota^2 \sum_{K\in \mcal T_e}  (\nabla^3(v_h-w),\nabla^3\phi_e)_{K} + (f, \phi_e)_{\omega_e}
    \notag\\
    &-  \sum_{K\in \mcal T_e} (\nabla^2 v_h ,\nabla^2\phi_e)_{K} + \sum_{K\in \mcal T_e} (\nabla^2 (v_h-w) ,\nabla^2\phi_e)_{K},
  \end{align}
 Noted that  $v_h|_K\in \mathbb P_3(K)$, $\phi_e\in H^3_0(\Omega)$ and $\phi_e |_e=\partial_{\bm n_e}\phi_e|_e= 0$. Using integration by parts, we have
  \[
  \sum_{K\in \mcal T_e} (\nabla^2 v_h ,\nabla^2\phi_e)_{K} = \sum_{K\in \mcal T_e}(\Delta^2 v_h,\phi_e)_{K}  + \sum_{\hat{e}\in \mcal \partial K,K\in \mcal T_e}\big( (\nabla^2 v_h ,\partial_n\phi_e)_{\hat{e}} -  (\partial_n\Delta v_h , \phi_e)_{\hat{e}}\big)=0,
  \]
which combined with \eqref{eq:GEKnnnB2} and the inverse inequality implies
      \begin{align}\label{eq:GEKVhnnn}
  & \iota^2 \| [\![  \partial_{\bm{nnn}} v_h ]\!]\|_{0,e}^2 \notag\\
   \lesssim &  \iota^2  \sum_{K\in \mcal T_e}|v_h-w|_{3,K}|\phi_e|_{3,K}   + \sum_{K\in \mcal T_e}|v_h-w|_{2,K}|\phi_e|_{2,K} +\|f \|_{0,\omega_e}  \|\phi_e\|_{0,\omega_e}
   \notag\\
   \lesssim &( \iota^4h_e^{-6}\sum_{K\in \mcal T_e} |v_h-w|_{3,K}^2   +h_e^{-4}\sum_{K\in \mcal T_e}  |v_h-w|_{2,K}^2 +\|f \|_{0,\omega_e}^2  )^{1/2}\|\phi_e\|_{0,\omega_e}.
  \end{align}
  From \eqref{eq:GEKeq_e} and the inverse inequality, one has
    \begin{align}\label{eq:GEKphie}
    \| \phi_e\|_{0,\omega_e}\lesssim h_e ^2\| J_{1,\omega_e}\|_{0,\omega_e} \lesssim h_e^{3}\|[\![  \partial_{\bm{nnn}} v_h ]\!]\|_{\infty,e}\lesssim h_e^{5/2}\|[\![  \partial_{\bm{nnn}} v_h ]\!]\|_{0,e}.
  \end{align}
  Therefore, \eqref{eq:GEKerrorVhnnn} can be achieved by \eqref{eq:GEKerrorf_Deltavh}, \eqref{eq:GEKVhnnn} and \eqref{eq:GEKphie}.

 Finally, we are going to prove \eqref{eq:GEKerrorVhnn}. For any $e\in \mcal E^i(\mcal T_h)$,  define $J_{2,\omega_e}$ by extending the jump $[\![  \partial_{\bm{nn}} v_h ]\!]|_e$ to $\omega_e$ constantly along the normal to $e$. Set $\varphi_e =  \varrho_{e,2}^3J_{2,\omega_e}\bm l_e\in H^3_0(\omega_e)$ and it's easy to know that
  \[
    \partial_{\bm n_e}\varphi_e|_e =  \varrho_{e,2}^3 [\![  \partial_{\bm{nn}} v_h ]\!]|_e.
  \]
  We get from the inverse inequality and \eqref{eq:GEKeq_e} that
  \begin{align}\label{eq:GEKvarphie}
    \| \varphi_e\|_{0,\omega_e}\lesssim h_e \| J_{2,\omega_e}\|_{0,\omega_e} \lesssim h_e^{2}\|[\![  \partial_{\bm{nn}} v_h ]\!]\|_{\infty,e}\lesssim  h_e^{3/2}\|[\![  \partial_{\bm{nn}} v_h ]\!]\|_{0,e} .
  \end{align}
  By scaling argument and the similar arguments for obtaining \eqref{eq:discreteFormIntegration11},
 \begin{align*}
    \| [\![  \partial_{\bm{nn}} v_h ]\!]\|_{0,e}^2\lesssim& ([\![  \partial_{\bm{nn}} v_h ]\!], \partial_{\bm n_e}\varphi_e)_e
    = \sum_{K\in \mcal T_e}(\partial_{\bm{nn}}v_h , \partial_{\bm n}\varphi_e)_{\partial K}
    =   \sum_{K\in \mcal T_e} (\nabla^2v_h,\nabla^2\varphi_e)_{K}.
  \end{align*}
  Then, using integration by parts and \eqref{model:GEK}, it holds
   \begin{align}\label{eq:GEKnnvh}
  \| [\![  \partial_{\bm{nn}} v_h ]\!]\|_{0,e}^2
     \lesssim  & (f,\varphi_e)_{\omega_e} +\sum_{K\in \mcal T_e} (\nabla^2(v_h-w),\nabla^2\varphi_e)_{K}
       \notag\\
       &+ \iota^2\sum_{K\in \mcal T_e}(\nabla^3(v_h-w),\nabla^3\varphi_e)_{K} - \iota^2\sum_{K\in \mcal T_e}(\nabla^3v_h ,\nabla^3\varphi_e)_{K} .
  \end{align}
  Since $\varphi_e \in H^3_0(\omega_e)$, we get from the similar arguments for deriving \eqref{eq:discreteFormIntegration5} that
     \begin{align}\label{eq:GEKnnvh1}
     \iota^2(\nabla^3v_h ,\nabla^3\varphi_e)_{\omega_e} =
       \iota^2 \sum_{K\in \mcal T_e}   (\partial_{\bm{nnn}}v_h,\partial_{\bm{nn}}\varphi_e) _{\partial K} =     \iota^2    ([\![ \partial_{\bm{nnn}}v_h ]\!],\partial_{\bm{nn}}\varphi_e)_e.
  \end{align}
  By \eqref{eq:GEKnnvh}-\eqref{eq:GEKnnvh1} and the inverse inequality, it holds
  \begin{align*}
    \| [\![  \partial_{\bm{nn}} v_h ]\!]\|_{0,e}^2\lesssim&
       \|f\|_{0,\omega_e}\|\varphi_e\|_{0,\omega_e}+\sum_{K\in \mcal T_e}|v_h-w|_{2,K}|\varphi_e|_{2,K}\notag\\
       &+ \iota^2\sum_{K\in \mcal T_e}|v_h-w|_{3,K}|\varphi_e|_{3,K}
        +\iota^2  \|[\![ \partial_{\bm{nnn}}v_h ]\!]\|_{0,e} \|\partial_{\bm{nn}}\varphi_e\|_{0,e}
       \notag\\
       \lesssim&  (   \|f\|_{0,\omega_e} +\iota^2  h_e^{-5/2} \|[\![ \partial_{\bm{nnn}}v_h ]\!]\|_{0,e})\|\varphi_e\|_{0,\omega_e}
          \notag\\
    & +  \Big(h_e^{-4}\sum_{K\in \mcal T_e}|v_h-w|_{2,K}^2+\iota^4 h_e^{-6}\sum_{K\in \mcal T_e}|v_h-w|_{3,K}^2 \Big)^{1/2}\|\varphi_e\|_{0,\omega_e} ,
  \end{align*}
  which together with \eqref{eq:GEKerrorf_Deltavh}, \eqref{eq:GEKerrorVhnnn} and \eqref{eq:GEKvarphie} yields \eqref{eq:GEKerrorVhnn} immediately.
\end{proof}

\begin{lemma}\label{lm:GEKStrangErrorah}
For any $w_I, v_h\in V_h$, it holds
  \begin{align*}
  \iota^2a_h(w_I, v_h- E_h^c v_h) \lesssim&(Osc_h(f) +  \|w- w_I\|_{\iota,h})\|v_h\|_{\iota,h},
\end{align*}
 where
  \begin{equation}\label{eq:GEKOSCf}
    Osc_h(f)^2 = \sum_{K\in \mcal T_h}h_K^4\|f-\Pi_K^0f\|_{0,K}^2.
  \end{equation}
\end{lemma}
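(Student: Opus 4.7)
The plan is to rewrite $\iota^2 a_h(w_I, v_h-E^c_h v_h)$ as a sum of edge integrals and then bound each using residual estimates from Lemma~\ref{lm:GEKerrorStrang} together with the enriching-operator bounds \eqref{eq:GEKenrichingError1}--\eqref{eq:GEKenrichingError2}. Setting $\phi:=v_h-E^c_h v_h$, first I would perform elementwise integration by parts on the bulk contribution $\iota^2\sum_K(\nabla^3 w_I,\nabla^3\phi)_K$, following the identity \eqref{eq:discreteFormIntegration5}. Since $w_I|_K\in\mathbb{P}_3(K)$, the expressions $\Delta^3 w_I$, $\partial_{\bm{nn}}\Delta w_I$ and $\partial_{\bm{nt}}\Delta w_I$ all vanish on each $K$, so only the three boundary integrals involving $\partial_{\bm{nnn}}w_I$, $\partial_{\bm{nnt}}w_I$ and $\partial_{\bm{ntt}}w_I$ survive. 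I would then convert the $\partial K$ sums into edge sums through $[\![uv]\!]=\{\!\{u\}\!\}[\![v]\!]+[\![u]\!]\{\!\{v\}\!\}$, and add the explicit edge terms of $a_h$. Two of the resulting pairs, $(\{\!\{\partial_{\bm{nnn}}w_I\}\!\},[\![\partial_{\bm{nn}}\phi]\!])_e$ and $2(\{\!\{\partial_{\bm{nnt}}w_I\}\!\},[\![\partial_{\bm{nt}}\phi]\!])_e$, cancel exactly, and since $v_h$ and $E^c_h v_h$ are both continuous $[\![\partial_{\bm{tt}}\phi]\!]=0$ eliminates one more tangential contribution.

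The remaining terms split into two groups. The \emph{easy} group consists of the contributions $\iota^2([\![\partial_{\bm{nn}}w_I]\!],\{\!\{\partial_{\bm{nnn}}\phi\}\!\})_e$, $2\iota^2([\![\partial_{\bm{nt}}w_I]\!],\{\!\{\partial_{\bm{nnt}}\phi\}\!\})_e$, $\iota^2([\![\partial_{\bm{ntt}}w_I]\!],\{\!\{\partial_{\bm{tt}}\phi\}\!\})_e$ and the two penalty terms, all of which involve only jumps of at most second-order derivatives of $w_I$. Because $w\in H^3_0(\Omega)\cap H^4(\Omega)$ by Theorem~\ref{th:GEKRegularity}, every such jump equals $-[\![\partial^\alpha(w-w_I)]\!]$ with $|\alpha|\le 2$. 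Applying Cauchy--Schwarz, the enriching bound \eqref{eq:GEKenrichingError2}, the tangential inverse estimate $\|[\![\partial_{\bm{ntt}}w_I]\!]\|_{0,e}\lesssim h_e^{-2}\|[\![\partial_{\bm{n}}w_I]\!]\|_{0,e}$ (available since $\partial_{\bm n}w_I|_K$ is quadratic along $e$), and the definition of $\|\cdot\|_{\iota,h}$, each of these is directly dominated by $\|w-w_I\|_{\iota,h}\|v_h\|_{\iota,h}$ without invoking Lemma~\ref{lm:GEKerrorStrang}.

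The delicate residue is $\iota^2([\![\partial_{\bm{nnn}}w_I]\!],\{\!\{\partial_{\bm{nn}}\phi\}\!\})_e$, together with the mixed-tangential companion coming from $[\![\partial_{\bm{nnt}}w_I]\!]$ (reduced via $\|[\![\partial_{\bm{nnt}}w_I]\!]\|_{0,e}\lesssim h_e^{-1}\|[\![\partial_{\bm{nn}}w_I]\!]\|_{0,e}$ combined with \eqref{eq:GEKerrorVhnn}). For the former I would take the square root of \eqref{eq:GEKerrorVhnnn} with $v_h$ replaced by $w_I$, decomposing $\iota^2\|[\![\partial_{\bm{nnn}}w_I]\!]\|_{0,e}$ into the three residual pieces $h_e^{5/2}(\sum_{K\in\mathcal T_e}\|f-\Pi_K^0 f\|_{0,K}^2)^{1/2}$, $\iota^2 h_e^{-1/2}(\sum_{K\in\mathcal T_e}|w-w_I|_{3,K}^2)^{1/2}$ and $h_e^{1/2}(\sum_{K\in\mathcal T_e}|w-w_I|_{2,K}^2)^{1/2}$. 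The main obstacle is making the Cauchy--Schwarz pairings \emph{robust in} $\iota$, since a naive pairing produces harmful factors $h/\iota$ or $\iota/h$. The remedy is: the $f$-piece pairs with $h_e^{1/2}\|\{\!\{\partial_{\bm{nn}}\phi\}\!\}\|_{0,e}$ and is absorbed by $Osc_h(f)\|v_h\|_{\iota,h}$ through \eqref{eq:GEKenrichingError1}; the third-derivative piece pairs with $h_e^{-1/2}\|\{\!\{\partial_{\bm{nn}}\phi\}\!\}\|_{0,e}$, and $\iota^2$ is split as $\iota\cdot\iota$ so that $\iota|w-w_I|_{3,h}\le\|w-w_I\|_{\iota,h}$ balances $\iota\interleave v_h\interleave_{3,h}\le\|v_h\|_{\iota,h}$ via \eqref{eq:GEKenrichingError2}; the second-derivative piece pairs again with $h_e^{1/2}\|\{\!\{\partial_{\bm{nn}}\phi\}\!\}\|_{0,e}$ and directly yields $\|w-w_I\|_{\iota,h}\|v_h\|_{\iota,h}$ through \eqref{eq:GEKenrichingError1}. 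Summation of all pieces then delivers the claimed bound.
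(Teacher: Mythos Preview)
Your overall strategy coincides with the paper's: integrate by parts in the bulk term using that $w_I|_K\in\mathbb P_3(K)$, cancel the consistency pairs $(\{\!\{\partial_{\bm{nnn}}w_I\}\!\},[\![\partial_{\bm{nn}}\phi]\!])_e$ and $(\{\!\{\partial_{\bm{nnt}}w_I\}\!\},[\![\partial_{\bm{nt}}\phi]\!])_e$, and then estimate the surviving edge contributions with the enriching bounds \eqref{eq:GEKenrichingError1}--\eqref{eq:GEKenrichingError2}. Your handling of the genuinely delicate term $\iota^2([\![\partial_{\bm{nnn}}w_I]\!],\{\!\{\partial_{\bm{nn}}\phi\}\!\})_e$ via \eqref{eq:GEKerrorVhnnn}, with the three-way $\iota$-robust split you describe, is exactly what the paper does in \eqref{eq:GEKStrangErrorah3}.

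There is, however, a gap in your treatment of the companion term
\(
2\iota^2([\![\partial_{\bm{nnt}}w_I]\!],\{\!\{\partial_{\bm{nt}}\phi\}\!\})_e.
\)
Reducing $\|[\![\partial_{\bm{nnt}}w_I]\!]\|_{0,e}\lesssim h_e^{-1}\|[\![\partial_{\bm{nn}}w_I]\!]\|_{0,e}$ and then invoking \eqref{eq:GEKerrorVhnn} produces, among others, the piece
\(
\iota^2 h_e^{-3/2}\bigl(\sum_{K\in\mathcal T_e}|w-w_I|_{2,K}^2\bigr)^{1/2}\,\|\{\!\{\partial_{\bm{nt}}\phi\}\!\}\|_{0,e}.
\)
No $\iota$-robust Cauchy--Schwarz split of this product lands inside $\|w-w_I\|_{\iota,h}\,\|v_h\|_{\iota,h}$: the factor $|w-w_I|_{2}$ carries no $\iota$, so all of $\iota^2 h_e^{-3/2}$ must be absorbed by $\|\{\!\{\partial_{\bm{nt}}\phi\}\!\}\|_{0,e}$, yet the best available control from \eqref{eq:GEKenrichingError1}--\eqref{eq:GEKenrichingError2} is $\sum_e h_e^{-1}\|\partial_{\bm{nt}}\phi\|_{0,e}^2\lesssim\interleave v_h\interleave_{3,h}^2$, which is two powers of $h_e$ short. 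The fix is simply to move this term into your ``easy'' group: after the tangential inverse inequality write $\|[\![\partial_{\bm{nn}}w_I]\!]\|_{0,e}=\|[\![\partial_{\bm{nn}}(w_I-w)]\!]\|_{0,e}$ directly (using $[\![\partial_{\bm{nn}}w]\!]=0$), and also pass a tangential derivative from $\phi$ back via $\|\{\!\{\partial_{\bm{nt}}\phi\}\!\}\|_{0,e}\lesssim h_e^{-1}\|\{\!\{\partial_{\bm n}\phi\}\!\}\|_{0,e}$. This is precisely the paper's estimate \eqref{eq:GEKStrangErrorah4}, and the resulting pairing $(\iota h_e^{-1/2}\|[\![\partial_{\bm{nn}}(w-w_I)]\!]\|_{0,e})\cdot(\iota h_e^{-3/2}\|\{\!\{\partial_{\bm n}\phi\}\!\}\|_{0,e})$ is robust.
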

\begin{proof}
 Let $\beta_{v_h} =  v_h- E_h^c v_h$. Employing the same arguments for proving \eqref{eq:discreteFormIntegration5}, one gets
 \begin{align*}
  \sum_{K\in \mcal T_h}   (\nabla^3 w_I,\nabla^3\beta_{v_h})_K
      = &    \sum_{K\in \mcal T_h}  (\partial_{\bm{nnn}} w_I, \partial_{\bm{nn}}\beta_{v_h})_{\partial K} +2  \sum_{K\in \mcal T_h}    (\partial_{\bm{nnt}} w_I, \partial_{\bm{nt}}\beta_{v_h}  )_{\partial K}
      \notag\\
      &
       +      \sum_{K\in \mcal T_h}  (\partial_{\bm{ntt}} w_I, \partial_{\bm{tt}}\beta_{v_h} )_{\partial K}    .
  \end{align*}
Then, recalling the definition of the bilinear form $a_h(\cdot,\cdot)$, we get 
  \begin{align}\label{eq:GEKStrangErrorah2}
     &a_h(w_I, \beta_{v_h})
      \notag\\
      =&  \sum_{e \in  \mcal{E}^i(\mcal T_h)}
      ([\![\partial_{\bm{nnn}} w_I]\!], \{\!\{\partial_{\bm{nn}}\beta_{v_h}\}\!\})_e
      + 2 \sum_{e \in  \mcal{E}^i(\mcal T_h)}
      ([\![\partial_{\bm{nnt}} w_I]\!], \{\!\{\partial_{\bm{nt}}\beta_{v_h}\}\!\})_e
      \notag\\
      &+\sum_{e \in  \mcal{E}^i(\mcal T_h)}
       ([\![\partial_{\bm{ntt}} w_I]\!], \{\!\{\partial_{\bm{tt}}\beta_{v_h}\}\!\})_e
        - \sum_{e \in \mcal{E}(\mcal T_h)}
      ([\![\partial_{\bm{nn}}w_I]\!], \{\!\{\partial_{\bm{nnn}}\beta_{v_h}\}\!\})_e
         \notag\\
     & - 2\sum_{e \in  \mcal{E}(\mcal T_h)}
       ([\![\partial_{\bm{nt}} w_I  ]\!] , \{\!\{\partial_{\bm{nnt}}\beta_{v_h}\}\!\})_e
       + \eta\sum_{e \in  \mcal{E}(\mcal T_h)}  h_e^{-1}( [\![  \partial_{\bm{nn}}w_I]\!],[\![  \partial_{\bm{nn}} \beta_{v_h} ]\!])_e
               \notag\\
    &  + \eta\sum_{e \in  \mcal{E}(\mcal T_h)}  h_e^{-3}( [\![  \partial_{\bm{n}}  w_I  ]\!], [\![  \partial_{\bm{n}} \beta_{v_h}  ]\!])_e  .
  \end{align}
  Combining the Cauchy-Schwarz inequality, \eqref{eq:GEKerrorVhnnn} and \eqref{eq:GEKenrichingError1}-\eqref{eq:GEKenrichingError2}, we find
  \begin{align}\label{eq:GEKStrangErrorah3}
  &  \iota^2\sum_{e \in  \mcal{E}^i(\mcal T_h)}   ([\![\partial_{\bm{nnn}} w_I]\!], \{\!\{\partial_{\bm{nn}}\beta_{v_h}\}\!\})_e
         \notag \\
         \lesssim &\sum_{e \in  \mcal{E}^i(\mcal T_h)} \Big(  h_e^{4}\|f-\Pi_K^0f\|_{0,\omega_e}^2  + \sum_{K\in \mcal T_e} |w-v_h|_{2,\omega_e}^2\Big)^{1/2}h_e^{1/2}\| \{\!\{\partial_{\bm{nn}}\beta_{v_h}\}\!\}\|_{0,e}
         \notag\\
      &+\sum_{e \in  \mcal{E}^i(\mcal T_h)} \iota^2h_e^{-1/2}\Big(\sum_{K\in \mcal T_e}|w-v_h|_{3,K}^2\Big)^{1/2}
       \| \{\!\{\partial_{\bm{nn}}\beta_{v_h}\}\!\}\|_{0,e}
         \notag\\
    \lesssim & ( Osc_h(f)  + |w-v_h|_{2,h})\interleave  v_h \interleave_{2,h}   +\iota^2 |w-v_h|_{3,h}  \interleave  v_h \interleave_{3,h}.
  \end{align}
 By the Cauchy-Schwarz inequality and the inverse inequality, it holds
    \begin{align}\label{eq:GEKStrangErrorah4}
 & 2\iota^2\sum_{e \in  \mcal{E}^i(\mcal T_h)}([\![\partial_{\bm{nnt}} w_I]\!], \{\!\{\partial_{\bm{nt}}  \beta_{v_h}\}\!\})_e  - 2 \iota^2\sum_{e \in  \mcal{E}(\mcal T_h)} ([\![\partial_{\bm{nt}} w_I ]\!] , \{\!\{\partial_{\bm{nnt}}  \beta_{v_h}\}\!\})_e
 \notag\\
      \lesssim&  \iota^2\sum_{e \in  \mcal{E}(\mcal T_h)}   \Big(h_e^{-2}\|[\![\partial_{\bm{nn}} w_I]\!]\|_{0,e}   \| \{\!\{\partial_{\bm{n}}  \beta_{v_h}\}\!\} \|_{0,e}+h_e^{-2}\|[\![\partial_{\bm{n}}w_I ]\!]\|_{0,e} \| \{\!\{\partial_{\bm{nn}}  \beta_{v_h}\}\!\}\|_{0,e}\Big)
 \notag\\
     =&  \iota^2\sum_{e \in  \mcal{E}(\mcal T_h)}   h_e^{-2}\|[\![\partial_{\bm{nn}} (w_I-w)]\!]\|_{0,e}   \|   \{\!\{ \partial_{\bm{n}}  \beta_{v_h}\}\!\}\|_{0,e}
     \notag\\
     &+ \iota^2\sum_{e \in  \mcal{E}(\mcal T_h)} h_e^{-2}\|[\![\partial_{\bm{n}}(w_I-w) ]\!]\|_{0,e} \| \{\!\{\partial_{\bm{nn}}  \beta_{v_h}\}\!\}\|_{0,e} .
  \end{align}
  Here we have used the fact that
  \begin{equation}\label{eq:GEKJump1}
   [\![\partial_{\bm{nn}}w ]\!]|_e =[\![\partial_{\bm{n}}w ]\!] |_e =0\quad \forall\;e\in \mcal E(\mcal T_h),
  \end{equation}
 which also implies
  \begin{align}
     &- \iota^2\sum_{e \in \mcal{E}(\mcal T_h)}
      ([\![\partial_{\bm{nn}}w_I]\!], \{\!\{\partial_{\bm{nnn}}\beta_{v_h}\}\!\})_e\notag
      \\
      \lesssim & \iota^2 \sum_{e \in \mcal{E}(\mcal T_h)}\|[\![\partial_{\bm{nn}}(w-w_I)]\!]\|_{0,e} \|  \{\!\{\partial_{\bm{nnn}}\beta_{v_h}\}\!\}\|_{0,e}.
  \end{align}
Similarly, we get from \eqref{eq:GEKJump1}, the inverse inequality and \eqref{eq:GEKStranginverseP0} that
  \begin{align}\label{eq:GEKStrangErrorah6}
      \iota^2 \sum_{e \in  \mcal{E}^i(\mcal T_h)}
       ([\![\partial_{\bm{ntt}} w_I]\!], \{\!\{\partial_{\bm{tt}}\beta_{v_h}\}\!\})_e
    \lesssim&   \iota^2\sum_{e \in  \mcal{E}^i(\mcal T_h)}   h_e^{-4}\|[\![\partial_{\bm{n}}  w_I  ]\!]\|_{0,e} \| \{\!\{\beta_{v_h}\}\!\} \|_{0,e}
         \notag\\
   \lesssim & \iota^2\sum_{e \in  \mcal{E}^i(\mcal T_h)}   h_e^{-9/2}\|[\![\partial_{\bm{n}}(w_I-w) ]\!]\|_{0,e} \| \beta_{v_h}\|_{0,\omega_e} .
  \end{align}
Using the Cauchy-Schwarz inequality, \eqref{eq:GEKJump} and \eqref{eq:GEKJump1}, one has
  \begin{align}\label{eq:GEKStrangErrorah5}
        &\eta\sum_{e \in  \mcal{E}(\mcal T_h)}  h_e^{-1}( [\![  \partial_{\bm{nn}} w_I ]\!],[\![  \partial_{\bm{nn}} \beta_{v_h}  ]\!])_e  + \eta\sum_{e \in  \mcal{E}(\mcal T_h)}  h_e^{-3}( [\![  \partial_{\bm{n}}   w_I   ]\!], [\![  \partial_{\bm{n}} \beta_{v_h}   ]\!])_e
               \notag\\
= &\eta\sum_{e \in  \mcal{E}(\mcal T_h)}  h_e^{-1}( [\![  \partial_{\bm{nn}}( w_I -w)]\!],[\![  \partial_{\bm{nn}} v_h ]\!])_e  + \eta\sum_{e \in  \mcal{E}(\mcal T_h)}  h_e^{-3}( [\![  \partial_{\bm{n}}(  w_I   -w) ]\!], [\![  \partial_{\bm{n}} v_h  ]\!])_e
    \notag\\
    \lesssim&\sum_{e \in  \mcal{E}(\mcal T_h)}  h_e^{-1}\| [\![  \partial_{\bm{nn}}( w_I -w)]\!]\|_{0,e}\|[\![  \partial_{\bm{nn}} v_h ]\!]\|_{0,e}
  \notag  \\
    &+ \sum_{e \in  \mcal{E}(\mcal T_h)}  h_e^{-3} \| [\![  \partial_{\bm{n}}(  w_I   -w) ]\!]\|_{0,e}\| [\![  \partial_{\bm{n}} v_h  ]\!] \|_{0,e}
    .
  \end{align}
Finally, we end the proof by combining \eqref{eq:GEKStrangErrorah2}-\eqref{eq:GEKStrangErrorah5} and \eqref{eq:GEKenrichingError2}.
\end{proof}

\begin{lemma}\label{lm:fbherror}
 For any $w_I, v_h\in V_h$, it holds
  \begin{align*}
   (f, v_h - E_h^c v_h) - b_h(w_I,v_h - E_h^c v_h)\lesssim&(Osc_h(f) +  \|w- w_I\|_{\iota,h})\|v_h\|_{\iota,h},
  \end{align*}
  where, $Osc_h(f)$ is determined by \eqref{eq:GEKOSCf}.
\end{lemma}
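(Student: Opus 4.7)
The plan is to mirror the proof of Lemma \ref{lm:GEKStrangErrorah}, recasting $(f,\beta_{v_h}) - b_h(w_I,\beta_{v_h})$ with $\beta_{v_h}:=v_h - E_h^c v_h$ into a combination of element and edge integrals, to be estimated by Cauchy--Schwarz together with the local lower bounds of Lemma \ref{lm:GEKerrorStrang} and the enriching estimates \eqref{eq:GEKenrichingError1}--\eqref{eq:GEKenrichingError2}. Two structural facts drive the argument: on each triangle $K$, $w_I|_K\in\mbb P_3(K)$ implies $\Delta^2 w_I \equiv 0$, $\nabla^4 w_I \equiv 0$, and in particular $\partial_{\bm{nn}}\Delta w_I\equiv 0$ (since $\Delta w_I$ is linear on $K$); and by Theorem \ref{th:GEKRegularity}, $w\in H^4(\Omega)\cap H^3_0(\Omega)$, so all jumps of $w,\partial_{\bm n}w,\partial_{\bm{nn}}w,\partial_{\bm{nt}}w$ across edges vanish.

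First I would substitute $f=\Delta^2 w - \iota^2\Delta^3 w$ into $(f,\beta_{v_h})$ and apply the elementwise identities \eqref{eq:discreteFormIntegration5} and \eqref{eq:discreteFormIntegration11}, consolidating the boundary sums into edge sums via $[\![uv]\!]=\{\!\{u\}\!\}[\![v]\!]+[\![u]\!]\{\!\{v\}\!\}$; the vanishing jumps of $w$ leave element integrals $\sum_K(\nabla^2 w,\nabla^2\beta_{v_h})_K + \iota^2\sum_K(\nabla^3 w,\nabla^3\beta_{v_h})_K$ together with edge terms carrying averages of $\partial_{\bm{nn}} w$, $\iota^2\partial_{\bm{nn}}\Delta w$, $\iota^2\partial_{\bm{nnn}} w$ and $\iota^2\partial_{\bm{nnt}} w$. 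Subtracting $b_h(w_I,\beta_{v_h})$ directly and exploiting $\{\!\{\partial_{\bm{nn}} w\}\!\}-\{\!\{\partial_{\bm{nn}} w_I\}\!\}=\{\!\{\partial_{\bm{nn}}(w-w_I)\}\!\}$, the element combination becomes $\sum_K(\nabla^2(w-w_I),\nabla^2\beta_{v_h})_K + \iota^2\sum_K(\nabla^3 w,\nabla^3\beta_{v_h})_K$. I would split the $\iota^2$-piece as $\iota^2\sum_K(\nabla^3(w-w_I),\nabla^3\beta_{v_h})_K + \iota^2\sum_K(\nabla^3 w_I,\nabla^3\beta_{v_h})_K$; the first two sums are bounded by $\|w-w_I\|_{\iota,h}\|v_h\|_{\iota,h}$ through Cauchy--Schwarz and \eqref{eq:GEKenrichingError3}, while the last, using $\nabla^4 w_I=0$, integrates by parts once into boundary sums $\iota^2\sum_K(\partial_{\bm n}\nabla^2 w_I,\nabla^2\beta_{v_h})_{\partial K}$, generating additional edge contributions in the jumps of $\partial_{\bm{nnn}} w_I,\partial_{\bm{nnt}} w_I,\partial_{\bm{ntt}} w_I$.

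The remaining edge contributions then fall into four families: (a) jumps of $\partial_{\bm n} w_I,\partial_{\bm{nn}} w_I,\partial_{\bm{nt}} w_I$ paired with averages of $\beta_{v_h}$-derivatives up to order two; (b) the $\iota^2$-weighted analogues carrying $\partial_{\bm{nnn}} w_I,\partial_{\bm{nnt}} w_I,\partial_{\bm{ntt}} w_I$; (c) averages of $\partial_{\bm{nn}}(w-w_I),\partial_{\bm{nnn}}(w-w_I),\partial_{\bm{nnt}}(w-w_I)$ paired with jumps of $\beta_{v_h}$-derivatives; and (d) the $\eta h_e^{-1}$ penalty term. The decisive cancellation is $\{\!\{\partial_{\bm{nn}}\Delta w\}\!\}=\{\!\{\partial_{\bm{nn}}\Delta(w-w_I)\}\!\}$, coming from $\partial_{\bm{nn}}\Delta w_I\equiv 0$, so every ``absolute'' $w$-trace arising in the $\iota^2\Delta^3 w$ expansion pairs with a $w_I$-contribution into a $(w-w_I)$-difference. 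Each edge term is then bounded by Cauchy--Schwarz with weights $h_e^{\pm 1/2}$ exactly as in the proof of Lemma \ref{lm:GEKStrangErrorah}: $\|[\![\partial_{\bm{nn}} w_I]\!]\|_{0,e}$ and $\iota^2\|[\![\partial_{\bm{nnn}} w_I]\!]\|_{0,e}$ are estimated by \eqref{eq:GEKerrorVhnn}--\eqref{eq:GEKerrorVhnnn} in terms of $Osc_h(f)+\|w-w_I\|_{\iota,h}$; tangential jumps are reduced to normal jumps via the inverse inequality \eqref{eq:GEKstability1}; $(w-w_I)$-averages are controlled by the scaled trace inequality in $\|w-w_I\|_{\iota,h}$; and the traces of $\beta_{v_h}$ are absorbed into $\interleave v_h\interleave_{2,h}$ or $\iota\,\interleave v_h\interleave_{3,h}$ through \eqref{eq:GEKenrichingError1}--\eqref{eq:GEKenrichingError2}.

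The main technical obstacle is the bookkeeping that guarantees every ``absolute'' $w$-trace produced by $(\Delta^2 w-\iota^2\Delta^3 w,\beta_{v_h})$ pairs off with a $w_I$-term from the IBP of $\iota^2(\nabla^3 w_I,\nabla^3\beta_{v_h})$ so that only $(w-w_I)$-differences remain; this hinges on $\partial_{\bm{nn}}\Delta w_I\equiv 0$ together with the $H^4$-regularity of $w$ from Theorem \ref{th:GEKRegularity}. Once the pairing is arranged, the Cauchy--Schwarz step collapses all contributions to the desired bound $(Osc_h(f)+\|w-w_I\|_{\iota,h})\|v_h\|_{\iota,h}$.
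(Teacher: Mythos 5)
There is a genuine gap in your proposal, and it is precisely the term you try to dispatch with the ``decisive cancellation.'' When you substitute $f=\Delta^2 w-\iota^2\Delta^3 w$ and integrate by parts elementwise, the $\iota^2\Delta^3 w$ piece (through \eqref{eq:discreteFormIntegration5}) produces the edge contribution
$\iota^2\sum_{e}(\{\!\{\partial_{\bm{nn}}\Delta w\}\!\},[\![\partial_{\bm n}\beta_{v_h}]\!])_e$. Your observation $\{\!\{\partial_{\bm{nn}}\Delta w\}\!\}=\{\!\{\partial_{\bm{nn}}\Delta(w-w_I)\}\!\}$ is only a relabeling: since $\partial_{\bm{nn}}\Delta w_I\equiv 0$, the right-hand side is again exactly $\partial_{\bm{nn}}\Delta w$, a \emph{fourth-order} derivative of $w$. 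Neither $\|w-w_I\|_{\iota,h}$ (which controls at most third-order differences, with an $\iota$ weight) nor $Osc_h(f)$ can dominate $\iota^2\|\{\!\{\partial_{\bm{nn}}\Delta w\}\!\}\|_{0,e}$; a trace estimate would only yield $\iota^2\|w\|_{4,\omega_e}$, hence $\iota^{1/2}\|f\|_0$ via Theorem \ref{th:GEKRegularity}, which is not of the required form for arbitrary $w_I\in V_h$. Separately, $\partial_{\bm{nn}}\Delta w\in L^2(\Omega)$ for $w\in H^4(\Omega)$ (the guaranteed regularity), so it does not even possess an $L^2(e)$ trace; the elementwise identity \eqref{eq:discreteFormIntegration5} is formal for the actual solution $w$, whereas the discrete function $w_I$ and the bubble-type tests in Lemma \ref{lm:GEKerrorStrang} are smooth enough to justify it.

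The paper sidesteps this entirely by never substituting $f$ in $(f,\beta_{v_h})$. It only integrates $\sum_K(\nabla^2 w_I,\nabla^2\beta_{v_h})_K$ by parts (using $\Delta^2 w_I\equiv0$), rewrites $(f,\beta_{v_h})-b_h(w_I,\beta_{v_h})$ as $(f,\beta_{v_h})$ plus four $w_I$-edge sums, and then bounds $(f,\beta_{v_h})\lesssim\sum_K\|f\|_{0,K}\|\beta_{v_h}\|_{0,K}$. The residual estimate \eqref{eq:GEKerrorf_Deltavh}, which you should use here with $v_h$ replaced by $w_I$, converts $\|f\|_{0,K}$ into $Osc_h(f)+\iota^2 h_K^{-3}|w-w_I|_{3,K}+h_K^{-2}|w-w_I|_{2,K}$; crucially, that estimate is derived by testing against $\phi_K\in H^3_0(K)$, so all boundary traces of $w$ vanish in its proof. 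Combined with the enriching bounds \eqref{eq:GEKenrichingError1}--\eqref{eq:GEKenrichingError2}, the jump condition \eqref{eq:GEKJump1}, \eqref{eq:GEKJump}, and the lower bound \eqref{eq:GEKerrorVhnn} for $[\![\partial_{\bm{nn}}w_I]\!]$, this gives the claimed estimate without ever encountering a $w$-trace of order higher than two. You should replace the IBP-of-$f$ step with this residual-based argument; the rest of your edge-term bookkeeping for the $w_I$ jumps is in the right spirit.
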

\begin{proof}
Using the similar arguments for deriving \eqref{eq:discreteFormIntegration11}, we have
 \begin{align*}
   \sum_{K\in \mcal T_h}(\nabla^2w_I, \nabla^2\beta_{v_h})_K
  =& 
   \sum_{K\in \mcal T_h}(\partial_{\bm{nn}} w_I, \partial_{\bm{n}} \beta_{v_h})_{\partial K}
   + \sum_{K\in \mcal T_h}(\partial_{\bm{nt}} w_I, \partial_{\bm{t}} \beta_{v_h})_{\partial K}
  \notag\\
  =& \sum_{e\in\mcal E^i(\mcal T_h)}([\![\partial_{\bm{nn}} w_I]\!],      \{\!\{ \partial_{\bm{n}} \beta_{v_h}\}\!\})_e
  + \sum_{e\in\mcal E(\mcal T_h)} ( \{\!\{ \partial_{\bm{nn}} w_I\}\!\}, [\![\partial_{\bm{n}}\beta_{v_h}]\!])_e
  \notag\\
  &  + \sum_{e\in\mcal E^i(\mcal T_h)}([\![\partial_{\bm{nt}} w_I]\!],      \{\!\{ \partial_{\bm{t}} \beta_{v_h}\}\!\})_e.
\end{align*}
Then, it holds
    \begin{align}\label{eq:fbherror3}
        (f, \beta_{v_h}) - b_h(w_I,\beta_{v_h})
    = &   (f, \beta_{v_h}) - \sum_{e\in\mcal E^i(\mcal T_h)}([\![\partial_{\bm{nn}} w_I]\!],    \{\!\{ \partial_{\bm{n}} \beta_{v_h}\}\!\})_e
    \notag\\
    &  -\sum_{e\in\mcal E^i(\mcal T_h)}([\![\partial_{\bm{nt}} w_I]\!],      \{\!\{ \partial_{\bm{t}} \beta_{v_h}\}\!\})_e  + \sum_{e\in\mcal E(\mcal T_h)}([\![\partial_{\bm{n}} w_I]\!],    \{\!\{ \partial_{\bm{nn}}\beta_{v_h}\}\!\})_e
    \notag\\
    &
    - \eta\sum_{e \in \mcal{E}(\mcal T_h)} h_e^{-1}( [\![  \partial_{\bm{n}} w_I ]\!], [\![  \partial_{\bm{n}} \beta_{v_h} ]\!])_e .
  \end{align}
  Applying the Cauchy-Schwarz inequality and \eqref{eq:GEKerrorf_Deltavh}, we get
     \begin{align*}
      (f,\beta_{v_h})
       \lesssim & \sum_{K\in \mcal T_h}\|f\|_{0,K}\|\beta_{v_h}\|_{0,K}
      \notag\\
      \lesssim& \sum_{K\in \mcal T_h}( h_K^{2} \|f-\Pi_K^0f\|_{0,K} +|w-v_h|_{2,K}) h_K^{-2}\|\beta_{v_h}\|_{0,K}
      \notag\\
      & +  \sum_{K\in \mcal T_h}\iota^2h_K^{-3}|w-v_h|_{3,K}\|\beta_{v_h}\|_{0,K},
  \end{align*}
which together with \eqref{eq:GEKenrichingError1} and \eqref{eq:GEKenrichingError2} yields
 \begin{align}\label{eq:fbherror4}
         (f, \beta_{v_h})
      \lesssim& ( Osc_h(f) + |w- w_I|_{2,h}) \interleave  v_h \interleave_{2,h}
        +   \iota^2|w- w_I|_{3,h}\interleave  v_h \interleave_{3,h}
      \notag\\
      \lesssim &(Osc_h(f) + \|w- w_I\|_{\iota,h})\|v_h\|_{\iota,h}.
  \end{align}
Similarly, by \eqref{eq:GEKerrorVhnn}, \eqref{eq:GEKenrichingError1} and \eqref{eq:GEKenrichingError2}, 
      \begin{align}\label{eq:fbherror5}
      &  - \sum_{e\in\mcal E^i(\mcal T_h)}([\![\partial_{\bm{nn}} w_I]\!],
      \{\!\{ \partial_{\bm{n}}\beta_{v_h}\}\!\}   )_e
       \notag\\
       \lesssim &  \sum_{e\in\mcal E^i(\mcal T_h)}\|[\![\partial_{\bm{nn}} w_I]\!]\|_{0,e}\| \{\!\{ \partial_{\bm{n}} \beta_{v_h}\}\!\}\|_{0,e}
      \notag\\
      \lesssim&   \sum_{e\in\mcal E(\mcal T_h)}\sum_{K\in \mcal T_e}( h_K^{4}\|f-\Pi_K^0f\|_{0,K}^2   +|w- w_I|_{2,K}^2)^{1/2}h_e^{-1/2} \| \{\!\{ \partial_{\bm{n}}\beta_{v_h}\}\!\}\|_{0,e}
      \notag\\
      &+\sum_{e\in\mcal E^i(\mcal T_h)}\iota^2h_e^{-3/2}(\sum_{K\in \mcal T_e}|w- w_I|_{3,K}^2)^{1/2}  \| \{\!\{ \partial_{\bm{n}}\beta_{v_h}\}\!\}\|_{0,e}
      \notag\\
         \lesssim &(Osc_h(f) + \|w- w_I\|_{\iota,h})\|v_h\|_{\iota,h}.
  \end{align}
Similarly, we get from \eqref{eq:GEKJump1}, the inverse inequality, \eqref{eq:GEKStranginverseP0} and \eqref{eq:GEKenrichingError1} that
  \begin{align}
& - \sum_{e \in  \mcal{E}^i(\mcal T_h)}
       ([\![\partial_{\bm{nt}} w_I]\!], \{\!\{\partial_{\bm{t}}\beta_{v_h}\}\!\})_e\\
    \lesssim&   \sum_{e \in  \mcal{E}(\mcal T_h)}   h_e^{-2}\|[\![\partial_{\bm{n}}  w_I  ]\!]\|_{0,e} \| \{\!\{\beta_{v_h}\}\!\} \|_{0,e}
         \notag\\
   \lesssim &\big( \sum_{e \in  \mcal{E}^i(\mcal T_h)}   h_e^{-1}\|[\![\partial_{\bm{n}}(w_I-w) ]\!]\|_{0,e}^2\big) ^{1/2} \big(\sum_{e \in  \mcal{E}^i(\mcal T_h)}h_e^{-4}\| \beta_{v_h}\|_{0,\omega_e}^2\big) ^{1/2}
   \notag\\
   \lesssim & \interleave  w_I-w\interleave_{2,h}  \interleave  v_h \interleave_{2,h} .
  \end{align}
  By \eqref{eq:GEKJump}, \eqref{eq:GEKJump1} and \eqref{eq:GEKenrichingError1}, we have
 \begin{align}\label{eq:fbherror6}
  & \sum_{e \in \mcal{E}(\mcal T_h)} ([\![\partial_{\bm{n}} w_I]\!],    \{\!\{ \partial_{\bm{nn}} \beta_{v_h}\}\!\})_e
 \notag\\
   = &\sum_{e \in \mcal{E}(\mcal T_h)} ([\![\partial_{\bm{n}} (w_I-w)]\!],    \{\!\{ \partial_{\bm{nn}} \beta_{v_h}\}\!\})_e
    \notag\\
\lesssim &\Big(\sum_{e \in \mcal{E}(\mcal T_h)}h_e^{-1}\|[\![\partial_{\bm{n}} (w_I-w)]\!]\|_{0,e}^2\Big)^{1/2}
    \Big(\sum_{e \in \mcal{E}(\mcal T_h)}h_e\|\{\!\{ \partial_{\bm{nn}}   \beta_{v_h}\}\!\}\|_{0,e}^2\Big)^{1/2}
\notag\\
\lesssim &  \interleave  w_I-w\interleave_{2,h}  \interleave  v_h \interleave_{2,h}
  \end{align}
  and
  \begin{align}\label{eq:fbherror7}
- \eta\sum_{e \in \mcal{E}(\mcal T_h)} h_e^{-1}( [\![  \partial_{\bm{n}} w_I ]\!], [\![  \partial_{\bm{n}}    \beta_{v_h} ]\!])_e =&- \eta\sum_{e \in \mcal{E}(\mcal T_h)} h_e^{-1}( [\![  \partial_{\bm{n}}( w_I-w) ]\!], [\![  \partial_{\bm{n}} v_h  ]\!])_e
  \notag\\
\lesssim &  \interleave  w_I-w\interleave_{2,h}  \interleave  v_h \interleave_{2,h} .
  \end{align}
   Hence, we obtain the desired result by \eqref{eq:fbherror3}-\eqref{eq:fbherror7}.
\end{proof}

   \begin{lemma}\label{lm:GEKHermiteResidual}
    Let $w\in V$ be the solution of problem \eqref{weakForm:GEK}. Then for any $w_I,v_h\in V_h$, it holds
       \begin{align}
      \iota^2a_h( w- w_I ,E_h^c  v_h) + b_h (w- w_I,E_h^c v_h)
       \lesssim & \|   w- w_I\|_{\iota,h} \|  v_h\|_{\iota,h}.\label{eq:GEKHermiteResidual}
       \end{align}
     \end{lemma}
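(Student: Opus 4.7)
The plan is to exploit the fact that $E_h^c v_h \in W_h^c \subset H^3_0(\Omega)$, so that on every edge $e\in\mathcal E(\mathcal T_h)$ the jumps $[\![\partial_{\bm n}E_h^c v_h]\!]$, $[\![\partial_{\bm{nn}}E_h^c v_h]\!]$ and $[\![\partial_{\bm{nt}}E_h^c v_h]\!]$ vanish (by \eqref{eq:GEKJump}). Plugging $v=E_h^c v_h$ into the definitions of $a_h$ and $b_h$, every term in which a jump of a derivative of $E_h^c v_h$ appears drops out, including the two penalty terms. The same is true of any jump of $w$ itself, since $w\in H^3_0(\Omega)$ implies $[\![\partial_{\bm n}w]\!]=[\![\partial_{\bm{nn}}w]\!]=[\![\partial_{\bm{nt}}w]\!]=0$, so $[\![\partial_{\bm a}(w-w_I)]\!]=-[\![\partial_{\bm a}w_I]\!]$ for the relevant derivatives. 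Hence
\[
 a_h(w-w_I,E_h^c v_h) = \sum_K(\nabla^3(w-w_I),\nabla^3 E_h^c v_h)_K - \sum_e([\![\partial_{\bm{nn}}(w-w_I)]\!],\{\!\{\partial_{\bm{nnn}}E_h^c v_h\}\!\})_e - 2\sum_e([\![\partial_{\bm{nt}}(w-w_I)]\!],\{\!\{\partial_{\bm{nnt}}E_h^c v_h\}\!\})_e,
\]
and similarly $b_h(w-w_I,E_h^c v_h)$ reduces to one bulk term plus one jump term involving $[\![\partial_{\bm n}(w-w_I)]\!]$ and $\{\!\{\partial_{\bm{nn}}E_h^c v_h\}\!\}$.

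Next I would bound each surviving term by Cauchy--Schwarz with carefully chosen weights $h_e^{\pm 1/2}$, $h_e^{\pm 3/2}$ that are matched to the definition of $\interleave\cdot\interleave_{2,h}$ and $\interleave\cdot\interleave_{3,h}$. The bulk terms immediately give $|w-w_I|_{m,h}|E_h^c v_h|_{m,h}$ for $m=2,3$. For each edge term, the jump factor $h_e^{-1/2}\|[\![\partial_{\bm{nn}}(w-w_I)]\!]\|_{0,e}$ (or the $\partial_{\bm n}$ analogue with weight $h_e^{-3/2}$) is absorbed into $\interleave w-w_I\interleave_{3,h}$ or $\interleave w-w_I\interleave_{2,h}$, while the smooth factor $h_e^{1/2}\|\{\!\{\partial_{\bm{nnn}}E_h^c v_h\}\!\}\|_{0,e}$ (or the $\partial_{\bm{nn}}$ analogue) is handled by the polynomial trace inequality \eqref{eq:GEKStranginverseP0} to obtain $|E_h^c v_h|_{3,h}$ (or $|E_h^c v_h|_{2,h}$). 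The $\partial_{\bm{nt}}$ jump is the only term that is not controlled directly by $\interleave\cdot\interleave_{3,h}$; here I would invoke \eqref{eq:GEKstability1} to convert
\[
\|[\![\partial_{\bm{nt}}(w-w_I)]\!]\|_{0,e}=\|[\![\partial_{\bm{nt}}w_I]\!]\|_{0,e}\lesssim h_e^{-1}\|[\![\partial_{\bm n}(w-w_I)]\!]\|_{0,e},
\]
which then fits into $\interleave w-w_I\interleave_{3,h}$ via the $h_e^{-3}$-weighted term.

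Finally, combining these bounds and using $\iota^2 a_h+\,b_h$ in the weighted norm gives, after regrouping, a bound of the form $\|w-w_I\|_{\iota,h}\cdot(\interleave E_h^c v_h\interleave_{2,h}+\iota\interleave E_h^c v_h\interleave_{3,h})$, and the conclusion follows from the stability estimate \eqref{eq:GEKenrichingError3}, i.e.\ $\|E_h^c v_h\|_{\iota,h}\lesssim\|v_h\|_{\iota,h}$. I expect the only mildly delicate step to be the $\partial_{\bm{nt}}$-jump term, because it does not appear in either norm $\interleave\cdot\interleave_{m,h}$; the inverse-type estimate \eqref{eq:GEKstability1} is precisely what makes it controllable by $\|w-w_I\|_{\iota,h}$ without losing robustness in $\iota$.
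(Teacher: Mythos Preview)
Your proposal is correct and follows essentially the same approach as the paper: exploit \eqref{eq:GEKJump} to drop all terms containing jumps of $E_h^c v_h$, bound the remaining bulk and edge terms by Cauchy--Schwarz with matched $h_e$-weights and the polynomial trace inequality \eqref{eq:GEKStranginverseP0}, treat the $\partial_{\bm{nt}}$ jump via \eqref{eq:GEKstability1} after replacing $[\![\partial_{\bm{nt}}(w-w_I)]\!]$ by $-[\![\partial_{\bm{nt}}w_I]\!]$, and conclude with \eqref{eq:GEKenrichingError3}. The paper carries out exactly these steps; your identification of the $\partial_{\bm{nt}}$ term as the only delicate one is also on point.
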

    \begin{proof}
 For simplicity, let $\beta_w = w-w_I$. By \eqref{eq:GEKJump}, we can rewrite the discrete bilinear form as
    \begin{align}\label{eq:GEKHermiteResidual1}
  &\iota^2 a_h(\beta_w,E_h^c v_h) +   b_h(\beta_w ,E_h^c v_h)
  \notag\\
  = &\iota^2\sum_{K\in \mcal T_h}(\nabla^3 \beta_w, \nabla^3 E_h^c v)_K- \iota^2\sum_{e \in  \mcal{E}(\mcal T_h)}([\![\partial_{\bm{nn}}\beta_w ]\!], \{\!\{\partial_{\bm{nnn}} E_h^c v\}\!\})_e
  \notag\\
    &
               - 2\iota^2\sum_{e \in  \mcal{E}(\mcal T_h)}([\![\partial_{\bm{nt}}\beta_w ]\!] , \{\!\{\partial_{\bm{nnt}} E_h^c v\}\!\})_e
               \notag \\
        &+  \sum_{K\in \mcal T_h}(\nabla^2 \beta_w, \nabla^2 E_h^c v)_K  - \sum_{e \in  \mcal{E}(\mcal T_h)} (  [\![  \partial_{\bm{n}} \beta_w ]\!] , \{\!\{ \partial_{\bm{nn}} E_h^c v \}\!\})_e.
  \end{align}
  By \eqref{eq:GEKStranginverseP0} and the Cauchy-Schawarz inequality, one has
      \begin{align}\label{eq:GEKHermiteResidual2}
 & - \iota^2\sum_{e \in  \mcal{E}(\mcal T_h)}([\![\partial_{\bm{nn}}\beta_w ]\!], \{\!\{\partial_{\bm{nnn}} E_h^c v\}\!\})_e - \sum_{e \in  \mcal{E}(\mcal T_h)} (  [\![  \partial_{\bm{n}} \beta_w ]\!] , \{\!\{ \partial_{\bm{nn}} E_h^c v \}\!\})_e
  \notag\\
  \lesssim &  \iota^2\sum_{e \in  \mcal{E}(\mcal T_h)}h_e^{-1/2}\|[\![\partial_{\bm{nn}}\beta_w ]\!]\|_{0,e} |  E_h^cv |_{3,\omega_e}
           +\sum_{e \in  \mcal{E}(\mcal T_h)}h_e^{-1/2}\| [\![  \partial_{\bm{n}} \beta_w ]\!]\|_{0,e} |  E_h^c v |_{2,\omega_e}
           \notag\\
        \lesssim & \|\beta_w\|_{\iota,h}  \|E_h^c v \|_{\iota,h}.
  \end{align}
 Using the Cauchy-Schwarz inequality again, we have from \eqref{eq:GEKJump1} and the inverse inequality that
\begin{align}\label{eq:GEKHermiteResidual3}
 &- 2\iota^2\sum_{e \in  \mcal{E}(\mcal T_h)}([\![\partial_{\bm{nt}}\beta_w   ]\!] , \{\!\{\partial_{\bm{nnt}} E_h^c v\}\!\})_e
\notag\\
 =&
    2\iota^2\sum_{e \in  \mcal{E}(\mcal T_h)}([\![\partial_{\bm{nt}}w_I  ]\!] , \{\!\{\partial_{\bm{nnt}} E_h^c v\}\!\})_e
   \notag\\
   \lesssim &\iota^2\sum_{e \in  \mcal{E}(\mcal T_h)}h_e^{-2}\|[\![\partial_{\bm{n}}w_I  ]\!] \|_{0,e} \| \{\!\{\partial_{\bm{nn}} E_h^c v\}\!\}\|_{0,e}
   \notag\\
  \lesssim &\iota^2\big(\sum_{e \in  \mcal{E}(\mcal T_h)}h_e^{-3}\|[\![\partial_{\bm{n}}\beta_w  ]\!] \|_{0,e}^2\big)^{1/2}\big(\sum_{e \in  \mcal{E}(\mcal T_h)}h_e^{-1} \| \partial_{\bm{nn}} E_h^c v \|_{0,e}^2\big)^{1/2}
   \notag\\
   \lesssim& \|\beta_w\|_{\iota,h} \|  E_h^c v \|_{\iota,h}.
\end{align}
The inequalities \eqref{eq:GEKHermiteResidual1}-\eqref{eq:GEKHermiteResidual3} implies
  \begin{align*}
 \iota^2 a_h(\beta_w,E_h^c v_h) +   b_h(\beta_w ,E_h^c v_h)
        \lesssim & \|w-w_I\|_{\iota,h}  \|E_h^c v \|_{\iota,h}.
  \end{align*}
  Applying \eqref{eq:GEKenrichingError3} to the last inequality leads to the desired result immediately.
    \end{proof}

%
%
\begin{lemma}
   Let $w\in V$ and $w_h\in V_h$ be the solutions of problems \eqref{weakForm:GEK} and \eqref{eq:discreteStrangForm}, respectively. Then it holds
          \begin{align}
            \|w-w_h\|_{\iota,h} \lesssim& Osc_h(f) + (1+\eta)\inf\limits_{w_I\in V_h}\|w- w_I\|_{\iota,h} , \label{eq:GEKStrangerrorCea}
          \end{align}
            where, $Osc_h(f)$ is determined by \eqref{eq:GEKOSCf}.
\end{lemma}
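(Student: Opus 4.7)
The plan is to prove this Céa-like estimate in the standard way for nonconforming/interior-penalty discretizations: pick an arbitrary $w_I \in V_h$, estimate $\|w_h - w_I\|_{\iota,h}$ using discrete coercivity, and conclude by the triangle inequality. The novelty is that the analysis is carried out under minimal regularity (only $w \in V$), so instead of controlling the consistency error by direct Taylor/integration-by-parts on the exact solution, we exploit the enriching operator $E_h^c$ together with the lower-bound estimates of Lemma~\ref{lm:GEKerrorStrang}. Concretely, for any $v_h \in V_h$ we split
\begin{equation*}
v_h = E_h^c v_h + (v_h - E_h^c v_h),
\end{equation*}
where $E_h^c v_h \in W_h^c \subset H^3_0(\Omega)$ is conforming and can be tested against the continuous weak equation \eqref{weakForm:GEK}, while the remainder $v_h - E_h^c v_h$ is handled by the already-established a-posteriori-type bounds.

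First, I would fix $w_I \in V_h$ and apply the discrete coercivity of Lemma~\ref{lm:GEKStrangstability} to $v_h := w_h - w_I$, obtaining
\begin{equation*}
\tfrac{1}{2}\|w_h - w_I\|_{\iota,h}^2 \leq \iota^2 a_h(w_h - w_I, v_h) + b_h(w_h - w_I, v_h).
\end{equation*}
Using the discrete equation \eqref{eq:discreteStrangForm}, the right-hand side equals
\begin{equation*}
(f, v_h) - \iota^2 a_h(w_I, v_h) - b_h(w_I, v_h),
\end{equation*}
which I would next decompose via the enriching operator as
\begin{align*}
& \bigl[(f, E_h^c v_h) - \iota^2 a_h(w_I, E_h^c v_h) - b_h(w_I, E_h^c v_h)\bigr] \\
& \quad + \bigl[(f, v_h - E_h^c v_h) - \iota^2 a_h(w_I, v_h - E_h^c v_h) - b_h(w_I, v_h - E_h^c v_h)\bigr].
\end{align*}

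For the conforming piece, since $E_h^c v_h \in H^3_0(\Omega)$, the continuous variational equation \eqref{weakForm:GEK} gives $(f, E_h^c v_h) = \iota^2 a(w, E_h^c v_h) + b(w, E_h^c v_h)$, and because $a_h$ (respectively $b_h$) coincides with $a$ (respectively $b$) when the second argument is $H^3$-conforming (all jump and penalty contributions of the second argument vanish), the first bracket reduces to $\iota^2 a_h(w - w_I, E_h^c v_h) + b_h(w - w_I, E_h^c v_h)$, which is precisely the quantity bounded by $\|w - w_I\|_{\iota,h}\|v_h\|_{\iota,h}$ in Lemma~\ref{lm:GEKHermiteResidual}. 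For the nonconforming piece, the two summands $-\iota^2 a_h(w_I, v_h - E_h^c v_h)$ and $(f, v_h - E_h^c v_h) - b_h(w_I, v_h - E_h^c v_h)$ are controlled by Lemma~\ref{lm:GEKStrangErrorah} and Lemma~\ref{lm:fbherror}, respectively, each giving a bound of the form $(Osc_h(f) + \|w - w_I\|_{\iota,h})\|v_h\|_{\iota,h}$.

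Adding these three contributions, dividing by $\|w_h - w_I\|_{\iota,h}$, and invoking the triangle inequality $\|w - w_h\|_{\iota,h} \leq \|w - w_I\|_{\iota,h} + \|w_I - w_h\|_{\iota,h}$, followed by taking the infimum over $w_I \in V_h$, yields the claimed estimate. The factor $(1+\eta)$ is tracked by noticing that the penalty-term contributions in the continuity-type estimates inside Lemmas~\ref{lm:GEKStrangErrorah}--\ref{lm:fbherror} scale linearly with $\eta$, while all other constants are $\eta$-independent. The main obstacle is conceptual rather than computational: one must verify that when testing against the conforming part $E_h^c v_h$, every jump term in $a_h$ and $b_h$ that survives is one whose jump factor comes from $w - w_I$ (and not from $E_h^c v_h$), which is exactly what \eqref{eq:GEKJump} enables. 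Once this structural observation is in place, the routine combination of the three lemmas plus coercivity completes the argument.
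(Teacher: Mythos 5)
Your proposal is correct and follows exactly the same route as the paper: discrete coercivity applied to $w_h - w_I$, Galerkin orthogonality via \eqref{eq:discreteStrangForm}, the split $\tilde w_h = E_h^c\tilde w_h + (\tilde w_h - E_h^c\tilde w_h)$ with \eqref{weakForm:GEK} and \eqref{eq:GEKJump} used to rewrite the conforming piece as $\iota^2 a_h(w-w_I, E_h^c\tilde w_h) + b_h(w-w_I, E_h^c\tilde w_h)$, then invoking Lemmas~\ref{lm:GEKStrangErrorah}, \ref{lm:fbherror}, and \ref{lm:GEKHermiteResidual} followed by the triangle inequality. The only cosmetic difference is that you spell out the intermediate decomposition and the $\eta$-tracking more explicitly than the paper does.
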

\begin{proof}
   For any $ w_I\in V_h$, let $\tilde{w}_h= w_h- w_I$. By Lemma \ref{lm:GEKStrangstability}, \eqref{eq:discreteStrangForm} and \eqref{weakForm:GEK}, it follows that
        \begin{align*}
          \| w_h- w_I\|_{\iota,h}^2
         \lesssim&  (f,\tilde{w}_h ) -\iota^2a_h( w_I ,\tilde{w}_h)  - b_h(  w_I ,\tilde{w}_h)
          \notag\\
         = &  (f,\tilde{w}_h -E_h^c\tilde{w}_h) - b_h (w_I,\tilde{w}_h-E_h^c\tilde{w}_h)- \iota^2a_h( w_I , \tilde{w}_h-E_h^c\tilde{w}_h)
          \notag\\
                          &+ \iota^2a_h( w- w_I , E_h^c\tilde{w}_h) + b_h (w- w_I ,E_h^c\tilde{w}_h).
        \end{align*}
  Then, applying Lemma \ref{lm:GEKStrangErrorah}, Lemma \ref{lm:fbherror} and Lemma \ref{lm:GEKHermiteResidual} with $v_h = \tilde{w}_h$ leads to
    \begin{align}\label{eq:GEKStrangerrorOrder4}
     \| w_h- w_I\|_{\iota,h}^2
         \lesssim & \big(Osc_h(f) + (1+\eta)\|w- w_I\|_{\iota,h}\big) \| w_h- w_I\|_{\iota,h} .
        \end{align}
        The proof is completed by the triangle inequality and \eqref{eq:GEKStrangerrorOrder4}.
\end{proof}

\begin{theorem}
   Let $w\in V\cap H^4(\Omega)$, $w_0\in H^2_0(\Omega)\cap H^s(\Omega)$ and $w_h\in V_h$ be the solutions of problems \eqref{weakForm:GEK}, \eqref{weakForm:KPB} and \eqref{eq:discreteStrangForm}, respectively. Then the following error estimates hold:
          \begin{align}
            \|w-w_h\|_{\iota,h} \lesssim&Osc_h(f) + (h^2+\iota h) |w|_4,  \label{eq:GEKStrangerrorOrder3}  \\
            \|w-w_h\|_{\iota,h} \lesssim& Osc_h(f) +\iota^{1/2}\|f\|_0 + h^{s-2} |w_0|_s,  \label{eq:GEKStrangerrorOrder1}
          \end{align}
          where $s=3,4$ and $Osc_h(f)$ is determined by \eqref{eq:GEKOSCf}.
\end{theorem}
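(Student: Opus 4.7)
The plan is to derive both error estimates as direct consequences of the C\'ea-like lemma \eqref{eq:GEKStrangerrorCea} combined with the two interpolation error estimates \eqref{eq:GEKInterEf} and \eqref{eq:GEKInterEw} established in Lemma \ref{lm:GEKStrangInterpolation}. Since the infimum over $V_h$ in \eqref{eq:GEKStrangerrorCea} is bounded by the norm at any specific element of $V_h$, the natural choice is $w_I = I_h w$.

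For the estimate \eqref{eq:GEKStrangerrorOrder3}, I would substitute $w_I = I_h w$ into the C\'ea-like bound \eqref{eq:GEKStrangerrorCea} and then apply \eqref{eq:GEKInterEw} with $s = 4$, which gives $\|w - I_h w\|_{\iota,h} \lesssim (\iota h + h^2)|w|_4$. Since $\eta$ is a fixed positive constant independent of $h$ and $\iota$ (chosen in Lemma \ref{lm:GEKStrangstability}), the factor $1 + \eta$ can be absorbed into the hidden constant, yielding \eqref{eq:GEKStrangerrorOrder3} immediately.

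For the robust estimate \eqref{eq:GEKStrangerrorOrder1}, I would again substitute $w_I = I_h w$ into \eqref{eq:GEKStrangerrorCea} and this time apply \eqref{eq:GEKInterEf}, which provides the crucial $\iota$-uniform control $\|w - I_h w\|_{\iota,h} \lesssim \iota^{1/2}\|f\|_0 + h^{s-2}|w_0|_s$. This bound is precisely tailored to capture the boundary-layer behavior through the reduced-problem solution $w_0$, so substitution yields \eqref{eq:GEKStrangerrorOrder1} directly.

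There is no real obstacle here since all the hard work lies upstream: the robust regularity estimate of Theorem \ref{th:GEKRegularity}, the careful choice of interpolation operator tied to the averaged projection onto $\mathbb P_3$, and especially the C\'ea-like lemma whose proof required the a~posteriori local lower bounds of Lemma \ref{lm:GEKerrorStrang} and the enriching operator $E_h^c$. The final theorem is essentially a one-line consequence of combining those ingredients, so the proof should consist of two short assembly arguments, one per estimate.
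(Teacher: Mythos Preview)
Your proposal is correct and follows essentially the same approach as the paper: set $w_I = I_h w$ in the C\'ea-like lemma \eqref{eq:GEKStrangerrorCea} and then invoke \eqref{eq:GEKInterEw} (with $s=4$) for \eqref{eq:GEKStrangerrorOrder3} and \eqref{eq:GEKInterEf} for \eqref{eq:GEKStrangerrorOrder1}. The paper's proof is exactly this two-line assembly, so there is nothing to add or change.
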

\begin{proof}
  Setting $w_I$ as $ I_h w$ in \eqref{eq:GEKStrangerrorCea}. Then the inequality \eqref{eq:GEKStrangerrorOrder3} is derived by \eqref{eq:GEKInterEw} and the inequality \eqref{eq:GEKStrangerrorOrder1} is obtained by \eqref{eq:GEKInterEf}.
\end{proof}
\begin{theorem}\label{th:errorw0w}
   Let $w_0\in H^2_0(\Omega)\cap H^s(\Omega)$ and $w_h\in V_h$ be the solutions of problems \eqref{weakForm:KPB} and \eqref{eq:discreteStrangForm}, respectively. Then it holds
          \begin{align*}
           |w_0-w_h |_{2,h}  +\iota |w_0-w_h |_{3,h} \lesssim & Osc_h(f) + \iota^{1/2} \|f\|_{0}  + h^{s-2} |w_0|_s ,
          \end{align*}
           where $s=3,4$ and $Osc_h(f)$ is determined by \eqref{eq:GEKOSCf}.
\end{theorem}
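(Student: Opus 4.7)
The plan is to reduce the claim to the already-proved estimate \eqref{eq:GEKStrangerrorOrder1} for $\|w-w_h\|_{\iota,h}$ by inserting $w$ as an intermediary between $w_0$ and $w_h$, and then controlling the ``regularization'' error $w_0-w$ via the robust regularity result of Theorem~\ref{th:GEKRegularity}.

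First I would observe that from the very definition of the $\iota$-weighted broken norm one has the trivial inequality
\[
|v|_{2,h} + \iota\,|v|_{3,h} \le \sqrt{2}\,\|v\|_{\iota,h}\qquad\forall\,v\in V_h+H^3(\Omega).
\]
Applied to $v=w-w_h$ and combined with \eqref{eq:GEKStrangerrorOrder1}, this gives
\[
|w-w_h|_{2,h}+\iota\,|w-w_h|_{3,h}\lesssim Osc_h(f)+\iota^{1/2}\|f\|_0+h^{s-2}|w_0|_s.
\]

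Next I would use the triangle inequality $|w_0-w_h|_{m,h}\le |w_0-w|_m+|w-w_h|_{m,h}$ for $m=2,3$, so that the remaining task is to control $|w_0-w|_2+\iota\,|w_0-w|_3$. The first summand is bounded directly by Theorem~\ref{th:GEKRegularity}, which yields $|w-w_0|_2\lesssim \iota^{1/2}\|f\|_0$. For the second summand I would split $\iota|w_0-w|_3\le \iota|w_0|_3+\iota|w|_3$: the first term is controlled through the regularity estimate \eqref{eq:regularityKPB} for the reduced problem giving $\iota|w_0|_3\lesssim \iota\|f\|_0\lesssim \iota^{1/2}\|f\|_0$ (since $\iota\in(0,1)$), while the second is exactly the middle term of \eqref{eq:GEKRegularity}, yielding $\iota|w|_3\lesssim\iota^{1/2}\|f\|_0$.

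Putting the two bounds together gives
\[
|w_0-w_h|_{2,h}+\iota\,|w_0-w_h|_{3,h}\lesssim Osc_h(f)+\iota^{1/2}\|f\|_0+h^{s-2}|w_0|_s,
\]
which is the desired estimate. There is essentially no obstacle here: all the genuine analytic work (the robust regularity estimate and the robust C\'ea-type bound) has already been done, and the present theorem is a one-line triangle-inequality corollary that makes the boundary-layer term $\iota^{1/2}\|f\|_0$ visible explicitly in the error between the discrete solution and the solution $w_0$ of the reduced fourth-order Kirchhoff plate problem.
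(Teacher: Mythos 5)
Your proof is correct and follows essentially the same route as the paper's: insert $w$ as an intermediary via the triangle inequality, bound $|w_0-w|_2+\iota|w_0-w|_3\lesssim\iota^{1/2}\|f\|_0$ by combining the robust regularity estimate \eqref{eq:GEKRegularity} with \eqref{eq:regularityKPB}, and invoke \eqref{eq:GEKStrangerrorOrder1} for the remaining $w-w_h$ term. The only difference is that you spell out the intermediate bound $\iota|w_0-w|_3\le\iota|w_0|_3+\iota|w|_3$ and the trivial comparison $|v|_{2,h}+\iota|v|_{3,h}\lesssim\|v\|_{\iota,h}$, both of which the paper leaves implicit.
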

\begin{proof}
  Using \eqref{eq:GEKRegularity} and \eqref{eq:regularityKPB}, one has
   \begin{align}\label{eq:05271}
   |w_0-w |_{2,h}  +\iota |w_0-w |_{3,h}
     \lesssim \iota^{1/2}\|f\|_0.
   \end{align}
Applying the triangle inequality and \eqref{eq:GEKStrangerrorOrder1} we obtain the inequality as required.
\end{proof}

 Lastly, we are going to demonstrate that the method \eqref{eq:discreteStrangForm} is convergent without any additional regularity assumption for the exact solution $w$.
\begin{theorem}
    Let $w\in V$ and $w_h\in V_h$ be the solutions of problems \eqref{weakForm:GEK} and \eqref{eq:discreteStrangForm}, respectively. Then we have
    \begin{equation}
      \|w-w_h\|_{\iota,h}\to 0,\quad h\to 0.
    \end{equation}
\end{theorem}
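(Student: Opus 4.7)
The plan is to combine the C\'ea-like lemma \eqref{eq:GEKStrangerrorCea} with a density argument, since the only obstacle to directly applying \eqref{eq:GEKStrangerrorOrder3} is the low regularity of $w$.

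\textbf{Step 1: Oscillation term.} First, I would check that $\operatorname{Osc}_h(f)\to 0$ for any $f\in L^2(\Omega)$. Indeed, from \eqref{eq:GEKOSCf} and $\|f-\Pi_K^0 f\|_{0,K}\le \|f\|_{0,K}$ one gets $\operatorname{Osc}_h(f)^2\le h^4 \|f\|_0^2$, so no regularity of $f$ beyond $L^2$ is needed.

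\textbf{Step 2: Density.} Since $C_c^\infty(\Omega)$ is dense in $H^3_0(\Omega)$, for any $\varepsilon>0$ one can choose $w_\varepsilon\in C_c^\infty(\Omega)\subset H^3_0(\Omega)\cap H^4(\Omega)$ with $\|w-w_\varepsilon\|_3 <\varepsilon$. The key observation is that both $w$ and $w_\varepsilon$ belong to $H^3_0(\Omega)$, so the traces of $\partial_{\bm n}(w-w_\varepsilon)$ and $\partial_{\bm{nn}}(w-w_\varepsilon)$ are well-defined and single-valued across every interior edge, hence
\[
[\![\partial_{\bm n}(w-w_\varepsilon)]\!]|_e=[\![\partial_{\bm{nn}}(w-w_\varepsilon)]\!]|_e=0\quad \forall\, e\in\mathcal E(\mathcal T_h).
\]
Consequently the jump contributions in $\|w-w_\varepsilon\|_{\iota,h}$ drop out and, using $\iota\in (0,1)$,
\[
\|w-w_\varepsilon\|_{\iota,h}^2 \lesssim |w-w_\varepsilon|_2^2 + \iota^2|w-w_\varepsilon|_3^2 \lesssim \|w-w_\varepsilon\|_3^2 <\varepsilon^2.
\]

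\textbf{Step 3: Interpolation of the smooth approximation.} Since $w_\varepsilon\in H^4(\Omega)\cap H^2_0(\Omega)$, the interpolation estimate \eqref{eq:GEKInterEw} with $s=4$ yields
\[
\|w_\varepsilon - I_h w_\varepsilon\|_{\iota,h}\lesssim (\iota h+h^2)|w_\varepsilon|_4 \longrightarrow 0 \quad (h\to 0).
\]

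\textbf{Step 4: Assembly via the C\'ea-like lemma.} Taking $w_I=I_h w_\varepsilon\in V_h$ in \eqref{eq:GEKStrangerrorCea} and combining with Steps 1--3,
\[
\|w-w_h\|_{\iota,h}\lesssim \operatorname{Osc}_h(f) + (1+\eta)\bigl(\|w-w_\varepsilon\|_{\iota,h}+\|w_\varepsilon-I_h w_\varepsilon\|_{\iota,h}\bigr)\lesssim \operatorname{Osc}_h(f) + (1+\eta)\bigl(\varepsilon +(\iota h+h^2)|w_\varepsilon|_4\bigr).
\]
Letting $h\to 0$ with $w_\varepsilon$ (and hence $|w_\varepsilon|_4$) fixed gives $\limsup_{h\to 0}\|w-w_h\|_{\iota,h}\lesssim (1+\eta)\varepsilon$, and then letting $\varepsilon\to 0$ finishes the proof.

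The only slightly delicate point is Step 2, namely the vanishing of the jumps of $\partial_{\bm n}(w-w_\varepsilon)$ and $\partial_{\bm{nn}}(w-w_\varepsilon)$. This is what decouples the analysis from the higher-order regularity used in \eqref{eq:GEKStrangerrorOrder3}; everything else is a routine combination of density, the already-established interpolation error bound, and the C\'ea-like lemma.
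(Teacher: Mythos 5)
Your proof is correct and follows essentially the same strategy as the paper's: density of smooth functions, the C\'ea-like lemma \eqref{eq:GEKStrangerrorCea}, and the interpolation estimate \eqref{eq:GEKInterEw}. The one place you differ is actually an improvement: you approximate $w$ in $C_c^\infty(\Omega)$, which is dense in $H^3_0(\Omega)$, whereas the paper takes $\tilde w\in C^\infty(\overline\Omega)$ using density in $H^3(\Omega)$. Your choice is the right one for two reasons that the paper glosses over: (i) the norm $\|\cdot\|_{\iota,h}$ contains boundary-edge penalty terms of the form $h_e^{-1}\|\partial_{\bm n}(w-\tilde w)\|_{0,e}^2$ and $h_e^{-3}\|\partial_{\bm n}(w-\tilde w)\|_{0,e}^2$, and for a fixed $\tilde w$ with $\partial_{\bm n}\tilde w|_{\partial\Omega}\neq 0$ these blow up as $h\to 0$, so the paper's equality $\|w-\tilde w\|_{\iota,h}^2=|w-\tilde w|_2^2+\iota^2|w-\tilde w|_3^2$ requires the approximant to have vanishing boundary traces; and (ii) the quasi-interpolant $I_h$ is defined only on $H^2_0(\Omega)$, so $I_h\tilde w$ is not well-defined unless $\tilde w$ has the right boundary behavior. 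Your use of $w_\varepsilon\in C_c^\infty(\Omega)\subset H^3_0(\Omega)$ handles both points cleanly. One small wording issue in your Step 2: the interior jumps vanish simply because $w-w_\varepsilon\in H^3(\Omega)$ (so its second derivatives have a single-valued trace), while the boundary ``jumps'' (which are just traces) vanish because both $w$ and $w_\varepsilon$ lie in $H^3_0(\Omega)$; these are two separate observations and it is worth stating both explicitly.
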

\begin{proof}
As is known to all, $ C^{\infty}(\overline{\Omega})\cap H^3(\Omega)$ is dense in $H^3(\Omega)$. Thus, for any $\epsilon>0$, there exists $\tilde{w}\in C^{\infty}(\overline{\Omega})$ such that $\|w-\tilde{w}\|_3< \epsilon$ and it holds
\begin{align}
   \|w-\tilde{w}\|_{\iota,h}^2 = & |w-\tilde{w}|_{2}^2 + \iota^2 |w-\tilde{w}|_{3}^2
   \notag\\
    < & (1 +\iota^2 )\epsilon^2.
\end{align}
  Notice that $Osc(f)=O(h^2)$. Setting $w_I$ as $I_h \tilde{w}$ in \eqref{eq:GEKStrangerrorCea} and employing the similar arguments for deriving \eqref{eq:GEKInterEw}, one finds
  \begin{align}
      \|w-w_h\|_{\iota,h}\lesssim& (1+\eta)\|w-I_h \tilde{w}\|_{\iota,h} + O(h^2)
      \notag\\
     \lesssim&  (1+\eta)(\|w-\tilde{w}\|_{\iota,h} + \| \tilde{w}- I_h \tilde{w}\|_{\iota,h}) + O(h^2)
      \notag\\
     \lesssim &(1+\eta)((1+\iota)\epsilon +  (h+\iota )h |\tilde{w}|_4 )+ O(h^2),
  \end{align}
Hence, it holds
  \begin{equation}
    \limsup\limits_{h\to 0} \|w-w_h\|_{\iota,h} \lesssim \epsilon,
  \end{equation}
  and the proof is completed by the arbitrariness of $\epsilon$ and $ \|w-w_h\|_{\iota,h}\geq0$.
\end{proof}

\section{Numerical experiments}
\begin{table}[t]
	\small
	\centering
	\caption{\small The error $\|w-w_h\|_{\iota,h}$ for Example \ref{ex:GEK1} with $\eta=10$.}
	\begin{tabular}{cccccccccc}
		\toprule
 	\multirow{2}{*}{$\iota$}  & \multicolumn{5}{c}{$h$}  &\multirow{2}{*}{rate} \\
		\cline{2-6}
	 & 1/4 & 1/8 & 1/16 & 1/32& 1/64 \\
		\midrule[0.8pt]
        $1$ &  9.133e+01 & 5.226e+01 & 2.699e+01 &  1.361e+01 & 6.818e+00  & 1.00\\
       $10^{-2}$& 1.090e+01& 5.996e+00& 2.696e+00& 1.326e+00 &6.729e-01 & 0.98\\
       $ 10^{-4}$& 4.513e+00 & 1.583e+00 & 4.929e-01 & 1.711e-01 &7.269e-02 &1.24\\
        $10^{-6}$& 4.398e+00& 1.446e+00 &3.736e-01 & 8.883e-02 &2.224e-02 & 2.00\\
        $0$& 4.397e+00& 1.444e+00 & 3.722e-01& 8.761e-02&2.113e-02& 2.05 \\
		\bottomrule
	\end{tabular}
	\label{Tab:GEK1}
\end{table}

\begin{table}[t]
	\small
	\centering
	\caption{\small The performance for Example \ref{ex:GEK1}.}
	\begin{tabular}{ccccccccccc}
		\toprule
		\multirow{2}{*}{$\iota$}&	\multirow{2}{*}{$\eta$}  & \multicolumn{5}{c}{$h$}  &\multirow{2}{*}{rate} \\
		\cline{3-7}
	 &\multirow{2}{*}{} & 1/4 & 1/8 & 1/16 & 1/32& 1/64 \\
		\midrule[0.8pt]
		\multirow{5}{*}{$10^{-8}$}
      & $10^{-4}$& 4.200e+00&1.231e+00 &3.203e-01&7.985e-02 & 1.989e-02& 2.01\\
        \specialrule{0em}{1pt}{1pt}
 &$10^{-6}$&4.200e+00  &1.231e+00   & 3.203e-01 &  7.985e-02 & 1.989e-02 & 2.01\\
        \specialrule{0em}{1pt}{1pt}
     &  $1$ &4.230e+00& 1.266e+00& 3.376e-01& 8.538e-02&    2.137e-02 &2.00 \\
        \specialrule{0em}{1pt}{1pt}
     &  $ 10^{4}$& 7.303e+00 & 3.523e+00&1.663e+00  &7.591e-01   & 3.132e-01& 1.28\\
        \specialrule{0em}{1pt}{1pt}
     &   $10^{6}$& 7.323e+00  &3.565e+00  & 1.731e+00&8.575e-01&4.259e-01 & 1.01\\
		\bottomrule
	\end{tabular}
	\label{Tab:GEK17}
\end{table}

\begin{table}[tpb]
	\small
	\centering
	\caption{\small The performance for Example \ref{ex:GEK2} with $\iota=10^{-6}$ and $\eta=10$.}
	\begin{tabular}{ccccccccccc}
		\toprule
		 $h$  & $ 1/4$&  $ 1/8$&$ 1/16$  & $ 1/32$  & $ 1/64$ \\
		 \midrule[1pt]
      $ \|w_0- w_h\|_{\iota,h}  $ & 2.941e+00 & 8.607e-01 & 2.098e-01 & 4.997e-02 & 1.256e-02   \\
        rate&-&  1.77 &2.04 &2.07&   1.99
  \\
  $|w_0- w_h|_{1}$&   1.806e-01 &2.513e-02  &2.958e-03& 3.457e-04& 4.206e-05 \\
        rate&- &  2.84&3.09&3.10& 3.04\\
	    $|w_0- w_h|_{2,h}$& 2.840e+00& 8.113e-01& 1.888e-01 &4.286e-02& 1.027e-02\\
      rate&- & 1.81& 2.10& 2.14& 2.06\\
       $|w_0- w_h|_{3,h}$  &  5.073e+01 & 2.837e+01 & 1.381e+01&6.609e+00   & 3.243e+00\\
    rate&-&0.84& 1.04&1.06& 1.03\\
       $\|w_0- w_h\|_0$& 2.723e-02 & 3.063e-03& 2.545e-04& 1.760e-05& 1.138e-06\\
      rate&- & 3.15& 3.59& 3.85& 3.95\\
		\bottomrule
	\end{tabular}
	\label{Tab:GEK2}
\end{table}
\begin{table}[tpb]
	\small
	\centering
	\caption{\small The performance for Example \ref{ex:GEK2} with $\iota=10^{-8}$ and $\eta=10$.}
	\begin{tabular}{ccccccccccc}
		\toprule
		 $h$  & $ 1/4$&  $ 1/8$&$ 1/16$  & $ 1/32$  & $ 1/64$ \\
		 \midrule[1pt]
      $ \|w_0- w_h\|_{\iota,h}  $ & 2.940e+00   &8.600e-01 & 2.092e-01&4.943e-02 &1.207e-02  \\
       rate&-& 1.77& 2.04& 2.08&2.03
  \\
  $|w_0- w_h|_{1}$& 1.806e-01& 2.513e-02& 2.958e-03 &  3.457e-04&4.206e-05  \\
        rate&- &  2.84&3.09& 3.10&3.04\\
	    $|w_0- w_h|_{2,h}$& 2.840e+00 &  8.113e-01 &1.888e-01& 4.286e-02 & 1.027e-02  \\
          rate&- &  1.81 &2.10&2.14  & 2.06\\
       $|w_0- w_h|_{3,h}$  &  5.073e+01& 2.837e+01 & 1.381e+01 & 6.609e+00&3.243e+00  \\
      rate&-&0.84& 1.04&1.06&1.03\\
       $\|w_0- w_h\|_0$& 2.723e-02 &3.063e-03 &  2.545e-04 &1.760e-05 &1.136e-06 \\
     rate&- &  3.15& 3.59&3.85&3.95\\
		\bottomrule
	\end{tabular}
	\label{Tab:GEK22}
\end{table}

\begin{example}\label{ex:GEK1}\rm
The exact solution of \eqref{model:GEK} is chosen as
\begin{align*}
       w=& \sin^3(\pi x)\sin^3(\pi y),
	\end{align*}
 and the right side function is determined by \eqref{model:GEK} and $w$. The example is used to verify the estimate \eqref{eq:GEKStrangerrorOrder3}. Let $\eta=10$. We display the error $ \|w-w_h \|_{\iota,h}$ in Tab. \ref{Tab:GEK1} with different mesh size $h$ and size parameter $\iota$. Besides, we test the robustness of the numerical method with respect to the penalization parameter $\eta$ in Tab. \ref{Tab:GEK17}.
 \end{example}

 As shown in Tab. \ref{Tab:GEK1}, we observe that the convergence rate of $\|w- w_h\|_{\iota,h}$ for $\iota= 1,10^{-2},10^{-4}$ is close to $O(h)$, while one for $\iota= 0,10^{-6}$ would reach to $O(h^2)$ nearly. Since $Osc_h(f) =O(h^2)$, it can be shown that the numerical results are in coincidence with the theoretical predication in \eqref{eq:GEKStrangerrorOrder3}. Moreover, when $\iota=10^{-8}$, it can been seen from Tab. \ref{Tab:GEK17} that the numerical method \eqref{eq:discreteStrangForm} behaves well for a large range of $\eta$.

\begin{example}\label{ex:GEK2}\rm
The exact solution of the reduced problem \eqref{model:KPB} is set to be a function in the form
\begin{align*}
       w_0=& \sin^2(\pi x)\sin^2(\pi y).
	\end{align*}
Set $f$ computed from \eqref{model:KPB} be the right side term of \eqref{model:GEK}. Then, the exact solution of \eqref{model:GEK} possesses strong boundary layer. Besides, It's easy to check that $ f$ is independent of $\iota$ and the exact solution of \eqref{model:GEK} is unknown.   We use this example to demonstrate the robustness of the proposed method \eqref{eq:discreteStrangForm} with respect to size parameter $\iota$. 
We display the error $ w_0-w_h $ in Tab. \ref{Tab:GEK2} and Tab. \ref{Tab:GEK22} when $\iota=10^{-6},10^{-8} $ and $\eta =10$. 
\end{example}
By \eqref{eq:05271} and $Osc_h(f) =O(h^2)$ , it suffices to observe the behavior of error $
  w_0-w_h $ to verify \eqref{eq:GEKStrangerrorOrder1}. We can observe from Tab. \ref{Tab:GEK2} and Tab. \ref{Tab:GEK22} that when $\iota=10^{-6}$, $\|w_0 - w_h\|_{\iota,h}=O(h^{1.99})$ and when $\iota=10^{-8}$,  $\|w_0 - w_h\|_{\iota,h}=O(h^{2.03})$. It can be seen that the numerical results are in agreement with Theorem \ref{th:errorw0w}, which implies that the proposed method \eqref{eq:discreteStrangForm} is robust with respect to the size parameter $\iota$. Besides, it can be seen that $|w_0 - w_h|_{3,h}=O(h^{1.03})$, $|w_0 - w_h|_{2,h}=O(h^{1.99})$, $|w_0 - w_h|_1=O(h^{3.04})$, $\|w_0 - w_h\|_0=O(h^{3.95})$. 

\section{Declarations}
\begin{enumerate}
\item {\bf Conflicts of interest/Competing interests.} The authors have no known competing financial interests or personal relationships that could have appeared to influence the work reported in this manuscript.

\item {\bf Availability of data and material.} The datasets generated during and/or analysed during the current study are available from the corresponding author on reasonable request.

\item {\bf Code availability.}  The codes during the current study are available from the corresponding author on reasonable request.

\item {\bf Authors' contributions.} All authors contributed equally to this manuscript.
\end{enumerate}

\section*{Reference}
\bibliographystyle{abbrv}
\bibliography{GEK-Hermite}
\end{document}